\theoremstyle{thmstyleone}%
\newtheorem{theorem}{Theorem}
\newtheorem{proposition}[theorem]{Proposition}%
\theoremstyle{thmstyletwo}%
\newtheorem{example}{Example}%
\newtheorem{remark}{Remark}%
\newtheorem{lemma}{Lemma}%
\theoremstyle{thmstylethree}%
\begin{document}

\title[Article Title]{Characterization of invariant complex Finsler metrics and Schwarz lemma on the classical domains
}


\author{\fnm{Chunping} \sur{Zhong}}\email{zcp@xmu.edu.cn}



\affil{\orgdiv{School of Mathematical Sciences}, \orgname{Xiamen
University},  \city{Xiamen}, \postcode{361005}, \country{China}}




\abstract{
Our goal of this paper is to give a complete characterization of all holomorphic invariant strongly pseudoconvex complex Finsler metrics on the classical domains and establish a corresponding Schwarz lemma for holomorphic mappings with respect to these invariant metrics. We prove that every $\mbox{Aut}(\mathfrak{D})$-invariant strongly pseudoconvex complex Finsler metric $F$ on a classical domain $\mathfrak{D}$ is a K\"ahler-Berwald metric which is not necessary Hermitian quadratic, but it enjoys very similar curvature property as that of the Bergman metric on $\mathfrak{D}$. In particular, if $F$ is Hermitian quadratic, then $F$  must be a constant multiple of the Bergman metric on $\mathfrak{D}$.
This actually answers the $4$-th open problem posed by Bland and Kalka (Variations of holomorphic curvature for K\"ahler Finsler metrics, American Mathematical Society, 1996).
 We also obtain a general Schwarz lemma for holomorphic mappings from a classical domain $\mathfrak{D}_1$ into another classical domain $\mathfrak{D}_2$ whenever $\mathfrak{D}_1$ and $\mathfrak{D}_2$ are endowed with arbitrary holomorphic invariant K\"ahler-Berwald metrics $F_1$ and $F_2$, respectively. The method used to prove the Schwarz lemma is purely geometric. Our results show that the Lu constant of $(\mathfrak{D},F)$ is both an analytic invariant and a geometric invariant. This can be better understood in the complex Finsler setting.
}

\keywords{Invariant metric; K\"ahler-Berwald metric; Schwarz lemma; classical domain}


\pacs[MSC Classification]{32F45, 53C35, 53C60}

\maketitle

\section{Introduction and main results}

In order to classify domains in the sense of biholomorphic equivalence in high dimensional cases, it is very natural to introduce holomorphic invariant metrics. In 1922, S. Bergman introduced the Bergman kernel and metric  which have been a seminal part of geometric analysis and partial differential equation in several complex variables ever since then (cf. \cite{Krantz}). Note that the Bergman metric is a Hermitian quadratic metric.  In the geometric function theory of several complex variables, however, the most often used invariant metrics (such as the Carath\'eodory metric and the Kobayashi metric) are often complex Finsler metrics and  in general they do not have enough regularity to admit differential geometric studies.

Very recently in \cite{Zh2}, the author of the present paper proved that on the open unit ball $B_n$ in $\mathbb{C}^n$ there exists no $\mbox{Aut}(B_n)$-invariant strongly pseudoconvex complex Finsler metrics other than a constant multiple of the Bergman metric; on the unit polydisk $P_n$ in $\mathbb{C}^n$ with $n\geq 2$, however, the author explicitly constructed infinitely many $\mbox{Aut}(P_n)$-invariant strongly convex K\"ahler-Berwald metrics which are neither Hermitian quadratic nor holomorphically isometric.

Note that $B_n$ is an irreducible bounded symmetric domain with rank one, while $P_n$ is a reducible bounded symmetric domain with rank $n$. It is natural to ask whether this phenomenon also happens on a classical domain (cf. Hua \cite{Hua}). This question was  affirmatively answered in \cite{GZ}. The metrics constructed in \cite{GZ} are of particular interest since they are the only holomorphically invariant complex Finsler metrics which are not Hermitian quadratic found on classical domains. They are explicitly constructed, have good regularity properties and are strongly pseudoconvex. Consequently they are allowed differential geometrical applications. In this paper, we shall see that these metrics also enjoy curvature properties very similar to those of the Bergman metrics.
The results in \cite{GZ} also show that as far as  a classical domain with  rank $\geq 2$ is concerned, other than the geometry of Bergman metrics, it makes sense to develop the differential geometry of holomorphically invariant complex Finsler metrics which are not Hermitian quadratic.

For higher rank classical domains it is natural to look for the characterization of holomorphically invariant strongly pseudoconvex complex Finsler metrics and to ask whether such metrics are necessarily K\"ahler-Finsler in the sense of \cite{AP}.
As our first main result we give a complete characterization of such kinds of metrics on the classical domains  and show that indeed they are all K\"ahler-Finsler metrics (cf. Theorem \ref{thm-2} and Theorem \ref{thm-7}).

\begin{theorem}\label{m-th-1}
Let $\mathfrak{D}$ be a classical domain and $F:T^{1,0}\mathfrak{D}\rightarrow[0,+\infty)$ be an $\mbox{Aut}(\mathfrak{D})$-invariant strongly pseudoconvex complex Finsler metric. Then the following assertions are true:

(1) $F$ is necessary a  K\"ahler-Finsler metric. More precisely, $F$ is a complete K\"ahler-Berwald metric.

(2) The  holomorphic sectional curvature of $F$ is bounded from below and above by negative constants and the holomorphic bisectional curvature of $F$ is non-positive and bounded below by a negative constant.
\end{theorem}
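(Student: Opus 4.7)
The plan is to exploit the homogeneity of a classical domain $\mathfrak{D}$ and reduce the analysis of an $\mbox{Aut}(\mathfrak{D})$-invariant metric $F$ to its behavior at the origin $0\in\mathfrak{D}$. Because $\mbox{Aut}(\mathfrak{D})$ acts transitively and the isotropy subgroup $K_0$ at the origin acts linearly on $T_0^{1,0}\mathfrak{D}$, the function $F$ is completely determined by its $K_0$-invariant restriction $F(0;\cdot)$. The first step would be to invoke the characterization results (Theorem \ref{thm-2} and Theorem \ref{thm-7}) to obtain an explicit expression of $F(0;\cdot)$ in terms of the fundamental $K_0$-invariant polynomials built from the Jordan triple structure of $\mathfrak{D}$ (the elementary symmetric functions of the eigenvalues of $v^{*}v$ in type I, and their analogues in types II--IV).

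With this explicit form at the origin in hand, I would prove the K\"ahler--Berwald property in two sub-steps. First, working at the origin in the canonical Harish-Chandra coordinates, one writes out the Chern--Finsler connection coefficients of $F$ directly from the $K_0$-invariant description; the vanishing of the non-horizontal part and the independence from the fiber variable --- i.e.\ the Berwald condition --- is then forced by the $K_0$-equivariance of the whole construction. Second, one verifies the K\"ahler identity for the fundamental tensor at the origin (where all first derivatives of the Bergman-type invariants vanish) and propagates both conditions to every other point via the transitive isometric action of $\mbox{Aut}(\mathfrak{D})$. Completeness then follows: strong pseudoconvexity plus $K_0$-invariance plus compactness of the unit sphere in $T_0^{1,0}\mathfrak{D}$ shows $F$ is comparable to the Bergman metric at the origin, and the transitive isometric action transports this comparability globally; since the Bergman metric on $\mathfrak{D}$ is complete, so is $F$.

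For the curvature assertions in (2), the key observation is that both the holomorphic sectional curvature $K_F$ and the holomorphic bisectional curvature $B_F$ are $\mbox{Aut}(\mathfrak{D})$-invariant continuous functions on the corresponding unit bundles. By invariance they descend to continuous functions on a \emph{compact} $K_0$-quotient of the relevant projectivized fibers over the origin, which immediately yields the two-sided bounds on $K_F$ and the lower bound on $B_F$. The negativity of $K_F$ and the non-positivity of $B_F$ then require the explicit curvature computation at the origin: because $F$ is K\"ahler--Berwald, the horizontal curvature tensor has the same formal index structure as in the Hermitian case, so substituting the characterization from step one and using the Jordan-triple identities of $\mathfrak{D}$ reduces the sign analysis to a check on a finite set of $K_0$-invariants, mirroring the corresponding sign behavior for the Bergman metric.

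The main obstacle I expect is the explicit curvature computation when $F$ is \emph{not} Hermitian quadratic, since Finslerian curvatures depend on third- and fourth-order vertical derivatives of $F^{2}$ rather than only on the fundamental tensor. The strongly pseudoconvex assumption combined with the explicit $K_0$-invariant description of $F(0;\cdot)$ from the characterization theorems should however express all these vertical derivatives in terms of the same finite set of spectral invariants of $v^{*}v$, making both the curvature formulas and the sign analysis manageable; the resulting bounds are then exported to the rest of $\mathfrak{D}$ by the $\mbox{Aut}(\mathfrak{D})$-action.
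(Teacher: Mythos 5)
Your proposal follows essentially the same route as the paper: reduce to the origin by homogeneity, characterize the isotropy-invariant Minkowski norm there in terms of the spectral invariants of $VV^{\ast}$, verify the K\"ahler--Berwald condition by showing the mixed derivative $\partial^{2}F^{2}/\partial Z\,\partial\overline{V}$ vanishes at the origin and propagating by the $\mbox{Aut}(\mathfrak{D})$-action, and obtain the curvature bounds from an explicit sign computation at the origin together with compactness of the projectivized fiber. The only substantive deviation is completeness: you derive it from two-sided comparability with the Bergman metric, whereas the paper observes that the horizontal Chern--Finsler connection coefficients coincide with those of the Bergman metric, so the geodesics agree; both arguments work.
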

\begin{remark}
If $\mathfrak{D}=\mathfrak{R}_I(1,n)$, namely the classical domain of type $I$ with rank$(\mathfrak{D})=1$, then $\mathfrak{D}$ reduces to the open unit ball in $\mathbb{C}^n$. In this case, $\mathfrak{D}$ admits no $\mbox{Aut}(\mathfrak{D})$-invariant strongly pseudoconvex complex Finsler metric other than  a constant multiple of the Bergman metric \cite{Zh2}. If $\mbox{rank}(\mathfrak{D})\geq 2$, then  $\mathfrak{D}$ admits infinitely many $\mbox{Aut}(\mathfrak{D})$-invariant strongly pseudoconvex complex Finsler metrics which are neither Hermitian quadratic nor holomorphically isometric \cite{GZ}.
\end{remark}

The second purpose of this paper is to establish a general Schwarz lemma for holomorphic mappings between two classical domains whenever they are endowed with arbitrary holomorphically invariant K\"ahler-Berwald metrics which are not necessarily Hermitian quadratic. It is known that various kinds of Schwarz lemmas have played a very important role both in complex analysis and complex differential geometry.  Many researchers have contributed to the study of Schwarz lemma and striking results have been discovered on K\"ahler manifolds or more general Hermitian manifolds with suitable curvature assumptions. We cannot mention all related work but only name a few work here: Ahlfors \cite{Afs}, Look \cite{Look3}, Kobayashi \cite{Kobayashi}, Royden \cite{Royden}, Yau \cite{Yau}. See the book by Kim and Lee \cite{KL} and references therein for a detailed account on this topic from the viewpoint of differential geometry. The study of Schwarz lemma on complex Finsler manifolds is very recent. We refer the work of  \cite{SS}, \cite{Wan},\cite{NZ2}, \cite{LZ}.
 The study of Schwarz lemma on a classical domain $\mathfrak{D}$ endowed with the Bergman metric $ds_{\mathfrak{D}}^2$ goes back to the work of Look \cite{Look3}, where he proved case-by-case that for any holomorphic mapping $f$ from $\mathfrak{D}$ into itself,
 \begin{equation}
 f^\ast ds_{\mathfrak{D}}^2\leq l_0^2(\mathfrak{D})ds_{\mathfrak{D}}^2.\label{Lu}
 \end{equation}
Look also proved that $l_0^2(\mathfrak{D})=\mbox{rank}(\mathfrak{D})$, hence it is an analytic invariant of $\mathfrak{D}$. Note that the constant $l_0(\mathfrak{D})$ is optimal in the sense that it can also be achieved and is the the least constant such that \eqref{Lu} is satisfied. This fact was first observed and pointed out by Look \cite{Look3}. We call $l_0(\mathfrak{D})$ the Lu constant with respect to the Bergman metric on $\mathfrak{D}$. Indeed, $l_0(\mathfrak{D})=\sqrt{\frac{K_1}{K_2}}$, where $-K_1<0$ and $-K_2<0$ are the lower and upper bounds of the holomorphic sectional curvature of the Bergman metric  on $\mathfrak{D}$, respectively (cf. \cite{Look3}).

As our second main result we prove:
\begin{theorem}\label{m-th-2}
Let $\mathfrak{D}_1$ be a classical domain which is endowed with an arbitrary $\mbox{Aut}(\mathfrak{D}_1)$-invariant  K\"ahler-Berwald metric $F_1$ such that its holomorphic sectional curvature is bounded below by a negative constant $-K_1$. Let $\mathfrak{D}_2$ be another classical domain which is endowed with an $\mbox{Aut}(\mathfrak{D}_2)$-invariant K\"ahler-Berwald metric $F_2$ such that its holomorphic sectional curvature is bounded above by a negative constant $-K_2$. Then for every holomorphic mapping $f$ from $\mathfrak{D}_{1}$ into $\mathfrak{D}_{2}$, we have
\begin{equation}
(f^\ast F_2)(Z;V)\leq\sqrt{\frac{K_1}{K_2}}F_1(Z;V),\quad \forall (Z;V)\in T^{1,0}\mathfrak{D}_{1}.\label{fhg}
\end{equation}
\end{theorem}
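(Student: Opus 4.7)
The plan is to adapt Royden's form of the Ahlfors--Yau Schwarz lemma to the K\"ahler--Berwald Finsler setting, in the spirit of \cite{Wan, NZ2, LZ}. Put
\[
u(Z;V)\;:=\;\frac{F_2^{\,2}\!\bigl(f(Z);\,df(Z)V\bigr)}{F_1^{\,2}(Z;V)},\qquad (Z;V)\in T^{1,0}\mathfrak{D}_1\setminus\{0\}.
\]
Being $(1,1)$-homogeneous in $V$, $u$ descends to a smooth function on the projectivized bundle $\mathbb{P}T^{1,0}\mathfrak{D}_1$, and the desired inequality \eqref{fhg} is equivalent to $\sup u\le K_1/K_2$.

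The pointwise step is a Bochner-type calculation exploiting the K\"ahler--Berwald hypothesis on both $F_1$ and $F_2$. Berwald means that the Chern--Finsler connection coefficients of $F_i$ are independent of the fiber variable, so at a fixed base point $Z_0$ one may parallel-transport $V_0$ along curves in $\mathfrak{D}_1$ and treat the vector argument as a frozen (horizontal) section; the K\"ahler condition then turns the horizontal $i\partial\bar\partial$ of $\log u$ into an honest Hermitian Bochner expression, and the holomorphicity of $f$ produces the usual cross-term cancellations. Together with the definitions of the holomorphic sectional curvatures of $F_1$ and $F_2$, this yields pointwise an inequality of the form
\[
\Delta^{H}\log u(Z;V)\;\ge\;2K_{F_2}\!\bigl(f(Z);df(Z)V\bigr)\,u(Z;V)\;-\;2K_{F_1}(Z;V),
\]
where $\Delta^{H}$ denotes the horizontal Chern--Finsler Laplacian and $K_{F_i}$ the holomorphic sectional curvature of $F_i$; at points where $df(Z)V=0$ one has $u=0$ and the estimate is trivial. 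Inserting the hypothesized bounds $K_{F_1}\ge -K_1$ and $K_{F_2}\le -K_2$ gives the clean inequality $\Delta^{H}\log u\ge 2K_2\,u-2K_1$ wherever $u>0$.

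To promote this local inequality to a global bound one needs a maximum-principle argument, which is non-trivial because $\mathfrak{D}_1$ is non-compact. By Theorem~\ref{m-th-1} the metric $F_1$ is complete and its holomorphic bisectional curvature is bounded from below, so a Finsler version of the Omori--Yau maximum principle, applied along the parallel-transported section representing the fiber direction, produces a sequence $(Z_k;V_k)$ with $u(Z_k;V_k)\to M:=\sup u$ and $\limsup_k\Delta^{H}\log u(Z_k;V_k)\le 0$. Passing to the limit in the displayed inequality yields $0\ge 2K_2 M-2K_1$, i.e.\ $M\le K_1/K_2$, which is exactly \eqref{fhg}.

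The principal technical obstacle is the first step: setting up the horizontal Laplacian on $\mathbb{P}T^{1,0}\mathfrak{D}_1$ so that the Berwald condition precisely eliminates the fiber-derivative contributions that would otherwise contaminate the pointwise comparison with the two holomorphic sectional curvatures. A secondary difficulty is justifying the Omori--Yau step in the Finsler category; here the conclusions of Theorem~\ref{m-th-1} (completeness of $F_1$ and the lower bound on its holomorphic bisectional curvature) are essential, which is why the hypothesis that $F_1$ be an $\mbox{Aut}(\mathfrak{D}_1)$-invariant K\"ahler--Berwald metric, rather than merely a strongly pseudoconvex Finsler metric, enters crucially.
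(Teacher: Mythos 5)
Your plan is the Yau--Royden/Omori--Yau route in the style of \cite{NZ2}, but it does not go through under the hypotheses of Theorem \ref{m-th-2}, and the paper deliberately avoids it for exactly this reason. The first gap is the pointwise differential inequality. If $\Delta^{H}$ is a genuine trace over all horizontal directions at $Z$, then the Bochner computation for $\log u$ does \emph{not} produce the holomorphic sectional curvature of $F_2$ along the single direction $df(Z)V$: the target contribution is a sum of bisectional-type curvatures of $F_2$ in the pairs $(df(e_i),df(V))$, and the source contribution is a Ricci-type trace of the bisectional curvature of $F_1$. To control these you would need an upper bound $-K_2<0$ on the holomorphic \emph{bisectional} curvature of $F_2$ (Theorem \ref{m-th-1} only gives $B_{F_2}\le 0$, which can vanish on higher-rank domains) and a lower bound on a Ricci-type quantity of $F_1$, neither of which is assumed; Royden's polarization trick would instead introduce a rank factor $2k/(k+1)$ and spoil the constant $\sqrt{K_1/K_2}$. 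The only way your displayed inequality is literally true is if $\Delta^{H}\log u$ is computed along the one-dimensional disk tangent to $V$ --- but then you are doing the Ahlfors disk argument, and the Omori--Yau machinery is both unnecessary and inapplicable. The second gap is the maximum-principle step itself: the Omori--Yau principle in the Finsler category requires a strongly \emph{convex} (real Finsler) structure with flag-curvature control to have a usable distance function and Laplacian comparison; the metrics here are only strongly pseudoconvex, and the paper states explicitly (after Theorem \ref{m-th-2}) that avoiding the flag-curvature hypothesis of \cite{NZ2} is the point. Your ``parallel-transported section representing the fiber direction'' is also path-dependent and does not compute $\sup_V u(Z;V)$, so the function to which you apply the principle is not well defined.

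The paper's actual proof is entirely different and much softer. It first proves a comparison theorem (Theorems \ref{thm-fcc} and \ref{lemb}) pinning $F_1$ between multiples of the Carath\'eodory metric, $\frac{4}{K_1}F_C^2\le F_1^2$, via a maximum argument on the compact fiber $PT_0^{1,0}\mathfrak{D}_1$ and Proposition \ref{pmp}. Then for each $V$ with $F_1(0;V)=\sqrt{4/K_1}$ it deduces $F_C(0;V)\le 1$, hence $V\in\overline{\mathfrak{D}_1}$ by Suzuki's identification of the Carath\'eodory indicatrix of a convex circular domain with the domain itself, so $j_V(z)=zV$ is a holomorphic disk through $0$ that is an isometry at the origin from $(\mathbb{D},\mathcal{P})$ with curvature $-K_1$. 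Applying the one-variable Ahlfors--Schwarz lemma (Lemma \ref{lem-a}) to $f\circ j_V$ gives \eqref{fhg} at the origin, and homogeneity of $\mathfrak{D}_1$ transports it everywhere. If you want to salvage your approach, you would have to either restrict all second-order estimates to the extremal disk direction (recovering the paper's method) or add the bisectional-curvature and strong-convexity hypotheses that Theorem \ref{m-th-2} is designed to dispense with.
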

Comparing with the Schwarz lemma obtained in \cite{NZ2}, we do not require the metrics $F_1$ and $F_2$ in Theorem \ref{m-th-2}  to be strongly convex, hence does not need the assumption on the flag curvature (which is a real geometric quantity) of $F_1$ and $F_2$, respectively.  Comparing with the case of Bergman metrics \cite{Look3}, in order to obtain Theorem \ref{m-th-2}, we need to overcome several difficulties. The hardest one is the fact that in the non-Hermitian quadratic case there are infinitely many choices of $F_1$ on $\mathfrak{D}_1$ and $F_2$ on $\mathfrak{D}_2$, respectively. The method in Look \cite{Look3} cannot be directly used to prove the corresponding Schwarz lemma whenever the invariant metrics are non-Hermitian quadratic. The method that we use to  prove Theorem \ref{m-th-2} is purely geometric and it is essentially inspired by Kobayashi \cite{Kobayashi}. In the non-Hermitian quadratic case, however, we need to innovate Kobayashi's idea in order to be effective. The key point to prove Theorem \ref{m-th-2} is to establish  Theorem \ref{thm-fcc} and then use it to obtain Theorem \ref{lemb}.

Taking $\mathfrak{D}_1=\mathfrak{D}_2$ and  $F_1=F_2$ in Theorem \ref{m-th-2} and using Theorem \ref{m-th-1}, we immediately obtain the following theorem.
\begin{theorem}\label{m-th-3}
Let $\mathfrak{D}$ be a classical domain which is endowed with an $\mbox{Aut}(\mathfrak{D})$-invariant  K\"ahler-Berwald metric $F$ such that its holomorphic sectional curvature is bounded below and above by negative constants $-K_1$ and $-K_2$, respectively. Then for every holomorphic mapping $f$ from $\mathfrak{D}$ into itself, we have
\begin{equation}
(f^\ast F)(Z;V)\leq \sqrt{\frac{K_1}{K_2}}F(Z;V),\quad \forall(Z;V)\in T^{1,0}\mathfrak{D}.\label{sc}
\end{equation}
\end{theorem}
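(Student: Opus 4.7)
The plan is to obtain Theorem \ref{m-th-3} as a direct specialization of Theorem \ref{m-th-2}, with Theorem \ref{m-th-1} invoked to ensure that the hypotheses are intrinsic to the setting. Concretely, I would set $\mathfrak{D}_1=\mathfrak{D}_2=\mathfrak{D}$ and $F_1=F_2=F$ in Theorem \ref{m-th-2}. Since $F$ is assumed to be an $\mbox{Aut}(\mathfrak{D})$-invariant K\"ahler-Berwald metric, the structural hypotheses of Theorem \ref{m-th-2} are immediately satisfied on both the source and the target side. It then remains only to verify the curvature conditions on each side.

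For the source side, Theorem \ref{m-th-2} requires the holomorphic sectional curvature of $F_1$ to be bounded below by a negative constant $-K_1$; this is exactly the hypothesis of Theorem \ref{m-th-3}. For the target side, Theorem \ref{m-th-2} requires the holomorphic sectional curvature of $F_2$ to be bounded above by a negative constant $-K_2$; again this is exactly what the hypothesis of Theorem \ref{m-th-3} provides. Since source and target metrics coincide, a single metric $F$ simultaneously witnesses both bounds. Hence, for any holomorphic $f:\mathfrak{D}\to\mathfrak{D}$, the conclusion \eqref{fhg} of Theorem \ref{m-th-2} specializes to
\begin{equation*}
(f^{\ast}F)(Z;V)\leq \sqrt{\tfrac{K_1}{K_2}}\,F(Z;V),\qquad \forall (Z;V)\in T^{1,0}\mathfrak{D},
\end{equation*}
which is precisely \eqref{sc}.

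The role of Theorem \ref{m-th-1} in the argument is to guarantee that the class of metrics to which Theorem \ref{m-th-3} applies is exactly the natural one: every $\mbox{Aut}(\mathfrak{D})$-invariant strongly pseudoconvex complex Finsler metric on $\mathfrak{D}$ is automatically K\"ahler-Berwald and automatically has holomorphic sectional curvature bounded between two negative constants. Thus the hypothesis of Theorem \ref{m-th-3} is not restrictive beyond invariance and strong pseudoconvexity, and the statement covers every such metric on $\mathfrak{D}$.

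Because the deduction is essentially a specialization, there is no serious obstacle in this final step; all the analytic and geometric content is already encoded in Theorems \ref{m-th-1} and \ref{m-th-2}. The only point that merits care is the symmetric role of $-K_1$ and $-K_2$: although they both arise from the single metric $F$, one must resist the temptation to set $K_1=K_2$, since the bounds $-K_1$ (lower) and $-K_2$ (upper) on the holomorphic sectional curvature of a non-Hermitian-quadratic K\"ahler-Berwald metric will in general be distinct, and the resulting Lu-type constant $\sqrt{K_1/K_2}$ genuinely depends on $F$, reflecting that it is a geometric invariant of $(\mathfrak{D},F)$ rather than a purely analytic one.
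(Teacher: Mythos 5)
Your proposal is correct and coincides with the paper's own derivation: the paper obtains Theorem \ref{m-th-3} precisely by taking $\mathfrak{D}_1=\mathfrak{D}_2=\mathfrak{D}$ and $F_1=F_2=F$ in Theorem \ref{m-th-2}, with Theorem \ref{m-th-1} guaranteeing that any $\mbox{Aut}(\mathfrak{D})$-invariant strongly pseudoconvex complex Finsler metric satisfies the required structural and curvature hypotheses. Your remark on keeping $-K_1$ and $-K_2$ distinct is a sensible precaution and consistent with the paper's discussion of the Lu constant.
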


We call the constant $l_0(\mathfrak{D},F):=\sqrt{\frac{K_1}{K_2}}$ in \eqref{sc}  the Lu constant of $(\mathfrak{D},F)$, which is also optimal in the sense that it is the least constant such that \eqref{sc} is satisfied  and can be achieved.
\begin{example}[\cite{GZ}]\label{example}
		Let $\mathfrak{D}=\mathfrak{R}_I(m,n)$, for any fixed positive integer $k (\geq 2)$ and any fixed $t\in[0,+\infty)$, define
 \begin{eqnarray*}
			F_{t,k}^2(Z;V)&:=&\frac{m+n}{1+t}\Bigg\{\mbox{tr}\left\{(I-ZZ^\ast)^{-1}V(I-Z^\ast Z)^{-1}V^\ast\right\}\\
			&&+t\sqrt[k]{\mbox{tr}\left\{\left[(I-ZZ^\ast)^{-1}V(I-Z^\ast Z)^{-1}V^\ast\right]^k\right\}}\Bigg\},\quad \forall(Z; V)\in T^{1,0}\mathfrak{R}_I.
	\end{eqnarray*}
Then $F_{t,k}$ is an $\mbox{Aut}(\mathfrak{R}_I)$-invariant K\"ahler-Berwald metric on $\mathfrak{R}_I$. The lower and upper bounds of the holomorphic sectional curvature of $F_{t,k}$ was given by Theorem 3.26 in \cite{GZ}. Thus by the above Theorem \ref{m-th-3}, for any holomorphic mapping $f$ from $\mathfrak{R}_I$ into itself, we have
		\begin{eqnarray*}
			(f^\ast F_{t,k}^2)(Z;V)\leq\frac{m+t\sqrt[k]{m}}{1+t}F_{t,k}^2(Z;V),\quad (Z;V)\in T^{1,0}\mathfrak{R}_I.
		\end{eqnarray*}
For $m=1$,  $\mathfrak{R}_I(1,n)=B_n$ is the open unit ball in $\mathbb{C}^n$,  $F_{t,k}$ reduces to the Bergman metric $F_B$ on $B_n$ and $l_0(\mathfrak{R}_I, F_B)=1$.
For $m\geq 2$ and $t=0$,  $F_{0,k}$ reduces to the Bergman metric $F_B$ on $\mathfrak{R}_I$ and $l_0(\mathfrak{R}_I, F_B)=\sqrt{m}$ which was obtained by Look \cite{Look3}.
For $m\geq 2$ and $t\in (0,+\infty)$, $F_{t,k}$ is a non-Hermitian quadratic $\mbox{Aut}(\mathfrak{R}_I)$-invariant K\"ahler-Berwald metric on $\mathfrak{R}_I$ and we have
$$1<l_0(\mathfrak{R}_I, F)=\sqrt{\frac{m+t\sqrt[k]{m}}{1+t}}<\sqrt{\mbox{rank}(\mathfrak{R}_I)}.$$
\end{example}

Theorem \ref{m-th-3} and Example \ref{example} show that the Lu constant of $(\mathfrak{D},F)$ is both an analytic invariant and a geometric invariant. This can be better understood in complex Finsler setting since by Theorem \ref{m-th-1}, (up to multiplicative constants) there is only one $\mbox{Aut}(\mathfrak{D})$-invariant Hermitian quadratic metric (namely the Bergman metric) on $\mathfrak{D}$ and a constant multiple of the Bergman metric does not change the Lu constant, namely  $l_0(\mathfrak{D},F)=l_0(\mathfrak{D},\lambda F)$ for any $\lambda\in\mathbb{R}$ with $\lambda>0$, in this case we always have (cf. \cite{Look3})
$$
l_0(\mathfrak{D},F)=\sqrt{\frac{K_1}{K_2}}\equiv \sqrt{\mbox{rank}(\mathfrak{D})}.
$$
Whenever $\mbox{rank}(\mathfrak{D})\geq 2$, however,  $\mathfrak{D}$ admits infinitely many $\mbox{Aut}(\mathfrak{D})$-invariant non-Hermitian quadratic K\"ahler-Berwald metrics, which are not holomorphically isometric among them (cf. \cite{GZ}). In this case,
 $$
 l_0(\mathfrak{D},F)=\sqrt{\frac{K_1}{K_2}} \not\equiv \sqrt{\mbox{rank}(\mathfrak{D})},
 $$
namely the Lu constant $l_0(\mathfrak{D},F)$ depends on the choice of $F$ on $\mathfrak{D}$.
In both cases, we have $l_0(\mathfrak{D},F)\geq 1$, and the equality holds if and only if the holomorphic sectional curvature of $F$ is equal to a negative constant.

The rest of this paper will be organized as follows. In Section \ref{section-2}, we recall some basic notions of complex Finsler geometry on domains in $\mathbb{C}^n$. In Section \ref{section-3}, after giving a characterization of $\mbox{Iso}(\mathfrak{D})$-invariant complex Minkowski norm $\pmb{f}$ on  $T_0^{1,0}\mathfrak{D}$ for $\mathfrak{D}=\mathfrak{R}_I,\mathfrak{R}_{II},\mathfrak{R}_{III}$ and $\mathfrak{R}_{IV}$, respectively. We give the proof of Theorem \ref{m-th-1} by proving Theorem \ref{thm-2} for $\mathfrak{D}=\mathfrak{R}_I,\mathfrak{R}_{II},\mathfrak{R}_{III}$ and Theorem \ref{thm-7} for $\mathfrak{D}=\mathfrak{R}_{IV}$, respectively. In Section \ref{section-4}, we first establish a comparison theorem  of continuous complex Finsler metrics with the Carath\'eodory metric on a convex domains (cf. Theorem \ref{thm-fcc}),  as an application, we obtain a comparison theorem of holomorphically invariant strongly pseudoconvex  complex Finsler metrics with the Carath\'eodory metric on the classical domains and then use it to give the proof of Theorem \ref{m-th-2}.

\section{Complex Finsler metrics on domains in $\mathbb{C}^n$}\label{section-2}

In this section, we recall some necessary notations of complex Finsler geometry on domains in $\mathbb{C}^n$ with $n\geq 2$.  We refer to  \cite{AP} for more details of real and complex Finsler geometry on  manifolds.

Let $D$ be a domain in $\mathbb{C}^n$ and $T^{1,0}D\cong D\times \mathbb{C}^n$ the holomorphic tangent bundle of $D$. We denote $\widetilde{T^{1,0}D}$ the complement of the zero section in $T^{1,0}D$, and $\widetilde{T_p^{1,0}D}$ the complement of the zero vector in $T_p^{1,0}D$.
A (smooth) complex Finsler metric on $D$ is a continuous function $F:T^{1,0}D\rightarrow [0,+\infty)$ satisfying $F(z;\lambda v)=\mid\lambda\mid F(z;v)$
	for any $(z;v)\in T^{1,0}D$ and $\lambda\in\mathbb{C}$ and $G:=F^2$ is $C^\infty$ on
	$\widetilde{T^{1,0}D}$. Note that the smoothness is required only on $\widetilde{T^{1,0}D}$ but not on the whole $T^{1,0}D$, since $F$ is smooth on $T^{1,0}D$ if and only if $F$ is a Hermitian metric \cite{AP}.  In the following a complex Finsler metric is always assumed to be $C^\infty$ in the above sense until otherwise stated.
	A complex Finsler metric $F$ is called strongly pseudoconvex if the following Hermitian matrix
	$(G_{i\overline{j}}):=\left(\frac{\partial^2G}{\partial v_i\partial\overline{v_j}}\right)$
	is positive definite at any point $(z;v)\in \widetilde{T^{1,0}D}$ (cf. \cite{AP}).
 Note that for each fixed point $z_0\in D$, a strongly pseudoconvex complex Finsler metric $F$ on $D$ induces  a strongly pseudoconvex complex norm $f_{z_0}(v):=F(z_0;v)$ on $T_{z_0}^{1,0}D\cong\{z_0\}\times\mathbb{C}^n$, which is called a complex Minkowski norm. If, moreover, $f_{z_0}(v)$ is the same for any $z_0\in D$, then $F$ is called a complex Minkowski metric on $D$ (cf. \cite{A2}).
	
For a strongly pseudoconvex complex Finsler metric $F:T^{1,0}D\rightarrow [0,+\infty)$, if $F^2=\sum_{i,j=1}^ng_{i\overline{j}}(z)v_i\overline{v_j}$, namely $\frac{\partial^2F^2}{\partial v_i\partial\overline{v_j}}$ are independent of $v$, then $F$ is called Hermitian quadratic; otherwise $F$ is called non-Hermitian quadratic. For example, every Hermitian metric on $D$ is Hermitian quadratic.

Denote $(G^{\overline{j}l})$ the inverse matrix of $(G_{i\overline{j}})$ such that
	$\sum_{j=1}^nG_{i\overline{j}}G^{\overline{j}l}=\delta_{i}^l$, where $\delta_{i}^l=1$ for $i=l$ and $\delta_i^l=0$ for $i\neq l$.
	Then the Chern-Finsler non-linear connection coefficients $\varGamma_{;i}^l$ associated to $F$ are given by $\varGamma_{;i}^l=\displaystyle\sum_{s=1}^nG^{\overline{s}l}\frac{\partial^2G}{\partial z_i\partial\overline{v_s}}$.
If $F$ is Hermitian quadratic, then the coefficients $\varGamma_{;i}^l(z;v)$ are complex-linear with respect to $v$, otherwise $\varGamma_{;i}^l(z;v)$ are not necessary complex-linear with respect to $v$.
	The horizontal Chern-Finsler connection coefficients $\varGamma_{j;i}^l(z;v)$ associated to $F$ are given by
$\varGamma_{j;i}^l(z;v)=\frac{\partial \varGamma_{;i}^l}{\partial v_j}$, which satisfy $\displaystyle\sum_{j=1}^n\varGamma_{j;i}^l(z;v) v_j=\varGamma_{;i}^l(z;v)$.
	Using the homogeneity of $F$, it is easy to check that
	$\varGamma_{;i}^l(z;\lambda v)=\lambda\varGamma_{;i}^l(z;v)$ and $\varGamma_{j;i}^l(z;\lambda v)=\varGamma_{j;i}^l(z;v)$ for any $\lambda\in\mathbb{C}^\ast:=\mathbb{C}\setminus\{0\}$.
	$F$ is called a K\"ahler-Finsler metric if
$\varGamma_{j;i}^l(z;v)=\varGamma_{i;j}^l(z;v)$, which is equivalent to the condition $\varGamma_{;i}^l(z;v)=\displaystyle\sum_{j=1}^n\varGamma_{i;j}^l(z;v) v_j$ (cf. \cite{CS}); $F$ is called a complex Berwald metric or a complex manifold modeled a complex Minkowski space (cf. \cite{A2}) if
	$\varGamma_{j;i}^l(z;v)$ are locally independent of $v$, ;  $F$ is called a K\"ahler-Berwald metric if $F$ is both a K\"ahler-Finsler metric and a complex Berwald metric.  By definition, if $F$ is a Hermitian metric, then $F$ is necessary a complex Berwald metric. Thus every K\"ahler metric is obvious a  (Hermitian quadratic) K\"ahler-Berwald metric. There are, however, lots of non-Hermitian quadratic K\"ahler-Berwald metrics (cf. \cite{Zh2}).

Let $D$ be a domain in $\mathbb{C}^n$ and $F:T^{1,0}D\rightarrow [0,+\infty)$ a complex Finsler metric. Denote $\mbox{Aut}(D)$ the holomorphic automorphism group of $D$, then $F$ is called holomorphically invariant if
$
F(f(z);f_\ast(v))=F(z;v)
$
holds for any $f\in\mbox{Aut}(D)$ and any $(z;v)\in T^{1,0}D$, here $f_\ast$ denotes the differential of $f$ at the point $z\in D$. If furthermore $D$ is a homogeneous domain which contains the origin $0\in D$, we denote $\mbox{Iso}(D)$ the isotropy subgroup  at the origin.

\section{Characterization of invariant complex Finsler metrics on the classical domains}\label{section-3}

Denote $\mathscr{M}(m,n)$ the set of all $m\times n$ matrices over $\mathbb{C}$, which is a complex vector space isomorphic to $\mathbb{C}^{m n}$. In this paper, a point in $\mathbb{C}^n$ is always understood as an element in $\mathscr{M}(1,n)$.
	If $m=n$, we denote $\mathscr{M}(n)$ instead of $\mathscr{M}(n,n)$.
	Let $Z\in\mathscr{M}(m,n)$, we denote $Z'$ the transpose of $Z$, $\overline{Z}$ the complex conjugate of $Z$, and $Z^\ast=\overline{Z'}$ the conjugate transpose of $Z$. If $A\in \mathscr{M}(n)$ is a Hermitian matrix, we also denote $A>0$ to mean
	$A$ is positive definite, and $A\geq 0$ to mean $A$ is semi-positive definite.

A complex manifold $M$ with a Hermitian metric $h$ is said to be a Hermitian symmetric space if for every point $p\in M$, there exists an involutive holomorphic isometry $\sigma_p$ of $(M,h)$ such that $p$ is an isolated fixed point. It is known that $(M,h)$ is necessary a K\"ahler manifold (cf. \cite{Helgason}). An irreducible Hermitian symmetric space of non-compact type can be realized as a bounded symmetric domain in some complex Euclidean space $\mathbb{C}^N$ by the Harish-Chandra embedding (cf. \cite{Helgason}). \'E. Cartan \cite{Cartan} proved that, if $N\neq 16, 27$, every irreducible bounded symmetric domain $\mathfrak{D}$ in $\mathbb{C}^N$ is bihomorphically equivalent to one of the following classical domains (cf. Hua \cite{Hua}):
	\begin{eqnarray*}
		\mathfrak{R}_I(m,n)&=&\left\{Z\in \mathscr{M}(m,n): I_m-ZZ^\ast>0\right\}\subset\mathbb{C}^{mn},\quad (m\leq n),\\
		\mathfrak{R}_{II}(m)&=&\left\{Z\in\mathscr{M}(m): I_m-ZZ^\ast>0,\; Z=Z'\right\}\subset\mathbb{C}^{\frac{m(m+1)}{2}},\\
		\mathfrak{R}_{III}(m)&=&\left\{ Z\in\mathscr{M}(m): I_m-ZZ^\ast>0,\; Z=-Z'\right\}\subset\mathbb{C}^{\frac{m(m-1)}{2}},\\
		\mathfrak{R}_{IV}(N) &=&\left\{z\in\mathbb{C}^N: 1+\mid zz'\mid^2-2zz^\ast>0,\;1-\mid zz'\mid>0\right\},\quad (N\geq 2)
	\end{eqnarray*}
	where $I_m$ denotes the identity matrix of order $m$. The classical domains are all complete circular, homogeneous, convex domains with the origin being the center.
The rank of $\mathfrak{R}_I(m,n), \mathfrak{R}_{II}(m), \mathfrak{R}_{III}(m)$ and $\mathfrak{R}_{IV}(N)$ are $m, m,[m/2]$ and $2$, respectively.

 The purpose of this section is to give a complete characterization of $\mbox{Aut}(\mathfrak{D})$-invariant strongly pseudoconvex complex Finsler metrics on every one  of the above classical domains $\mathfrak{D}$.

\subsection{The cases of $\mathfrak{R}_I,\mathfrak{R}_{II},\mathfrak{R}_{III}$}

In this section, we give a characterization of $\mbox{Aut}(\mathfrak{D})$-invariant complex Finsler metrics on the classical domains $\mathfrak{R}_I,\mathfrak{R}_{II},\mathfrak{R}_{III}$. Let's first recall its holomorphic automorphism groups, we refer to Hua \cite{Hua} and Lu \cite{Lu} for more details.

The automorphism of $\mathfrak{R}_I(m,n)$ which transforms a fixed point $Z_0\in\mathfrak{R}_I$ into the origin is given in the form (cf. Hua \cite{Hua})
\begin{equation}
\varPhi_{Z_0}(Z)=A(Z-Z_0)\left(I_n-Z_0^\ast Z\right)^{-1}D^{-1},\quad Z\in\mathfrak{R}_I,\label{p-I}
\end{equation}
where $A\in\mathscr{M}(m)$ and $D\in\mathscr{M}(n)$ satisfy
\begin{equation}
A^\ast A=\left(I_m-Z_0Z_0^\ast\right)^{-1},\quad D^\ast D=\left(I_n-Z_0^\ast Z_0\right)^{-1}.\label{p-Ia}
\end{equation}
The differential $(\varPhi_{Z_0})_\ast$ of $\varPhi_{Z_0}$ at the point $Z_0\in\mathfrak{R}_I$ can be expressed (in matrix form) as follows:
\begin{equation}
(\varPhi_{Z_0})_\ast (V)=AV\left(I_n-Z_0^\ast Z_0\right)^{-1}D^{-1},\quad \forall V\in T_{Z_0}^{1,0}\mathfrak{R}_I,\label{D-I}
\end{equation}
where $A\in\mathscr{M}(m)$ and $D\in\mathscr{M}(n)$ satisfy \eqref{p-Ia}.

The automorphism of $\mathfrak{R}_{II}(m)$ which transforms a fixed point  $Z_0\in\mathfrak{R}_{II}$ into the origin is given in the form (cf. Hua \cite{Hua})
	\begin{equation}
	\varPhi_{Z_0}(Z)=A(Z-Z_0)(I_m-Z_0^\ast Z)^{-1}{\overline{A}}^{-1},\quad Z\in\mathfrak{R}_{II},\label{p-II}
	\end{equation}
	where$A\in\mathscr{M}(m)$ satisfies
	\begin{equation}
		A^\ast A=(I_m-Z_0Z_0^\ast)^{-1}.\label{p-IIa}
	\end{equation}
 The differential of $\varPhi_{Z_0}$ at the point $Z_0$ is given by
\begin{equation}
(\varPhi_{Z_0})_\ast(V)=AV(I_m-Z_0^\ast Z_0)^{-1}{\overline{A}}^{-1},\quad V\in T_{Z_0}^{1,0}\mathfrak{R}_{II},\label{D-II}
\end{equation}
where $A\in\mathscr{M}(m)$ satisfies \eqref{p-IIa}.

The automorphism of $\mathfrak{R}_{III}(m)$  which transform a fixed point $Z_0\in\mathfrak{R}_{III}$ into the origin is given in the form (cf. Hua \cite{Hua})
	\begin{equation}
		\varPhi_{Z_0}(Z)=A(Z-Z_0)(I_m-Z_0^\ast Z)^{-1}{\overline{A}}^{-1},\quad Z\in\mathfrak{R}_{III},\label{p-III}
	\end{equation}
	where $A\in\mathscr{M}(m)$ satisfies
	\begin{equation}
		A^\ast A=(I_m-Z_0Z_0^\ast)^{-1}.\label{p-IIIa}
	\end{equation}
The differential of $\varPhi_{Z_0}$ at the point $Z_0$ is given by
\begin{equation}
(\varPhi_{Z_0})_\ast(V)=AV(I_m-Z_0^\ast Z_0)^{-1}{\overline{A}}^{-1},\quad V\in T_{Z_0}^{1,0}\mathfrak{R}_{III},\label{D-III}
\end{equation}
where $A\in\mathscr{M}(m)$ satisfies \eqref{p-IIIa}.

The following theorem gives a characterization of $\mbox{Iso}(\mathfrak{D})$-invariant complex norm which is continuous on $T_0^{1,0}\mathfrak{D}$ and smooth on $\widetilde{T_0^{1,0}\mathfrak{D}}$.
\begin{theorem}\label{thm-1}
Let $\pmb{f}:T_{0}^{1,0}\mathfrak{D}\rightarrow [0,+\infty)$ be an $\mbox{Iso}(\mathfrak{D})$-invariant complex norm which is continuous on $T_0^{1,0}\mathfrak{D}$  and smooth on $\widetilde{T_{0}^{1,0}\mathfrak{D}}$ for  $\mathfrak{D}=\mathfrak{R}_I(m,n)$, or $\mathfrak{R}_{II}(m)$, or $\mathfrak{R}_{III}(m)$.
Then there exists a positive integer $k\in\mathbb{N}^+$ and a continuous $1$-homogeneous function $g: [0,+\infty)^k\rightarrow [0,+\infty)$ which is smooth and positive on $(0,+\infty)^k$  such that
\begin{equation}
\pmb{f}^2(V)=g(h_1(V),\cdots,h_{k}(V))
\end{equation}
with
\begin{equation}
h_\alpha(V):=\sqrt[\alpha]{\mbox{tr}\left\{(VV^\ast)^\alpha\right\}},\quad\forall V\in T_0^{1,0}\mathfrak{D}\quad\mbox{for}\quad \alpha=1,\cdots,k. \label{hv}
\end{equation}
\end{theorem}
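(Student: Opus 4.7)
The plan is to reduce the characterization to a classical invariant-theoretic question about unitary invariants of matrices, and then to promote the resulting polynomial statement to the smooth category via G.~Schwarz's theorem. First I would specialize the automorphism differentials (\ref{D-I}), (\ref{D-II}), (\ref{D-III}) to $Z_{0}=0$: the factors $(I-Z_{0}^{\ast}Z_{0})^{-1}$ trivialize and the identities $A^{\ast}A=I_{m}$, $D^{\ast}D=I_{n}$ force $A,D$ to be unitary. Hence the isotropy action on $T_{0}^{1,0}\mathfrak{D}$ becomes $V\mapsto AVD^{-1}$ with $A\in U(m)$, $D\in U(n)$ for $\mathfrak{R}_{I}(m,n)$, and $V\mapsto AVA'$ with $A\in U(m)$ for $\mathfrak{R}_{II}(m)$ and $\mathfrak{R}_{III}(m)$ (using $\overline{A}^{-1}=A'$ when $A$ is unitary, and preserving the symmetry, respectively skew-symmetry, of $V$).

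Next I would apply a spectral reduction in each case (the ordinary SVD for $\mathfrak{R}_{I}$, Takagi's decomposition $V=U\Sigma U'$ for symmetric $V$ in $\mathfrak{R}_{II}$, Youla's decomposition $V=U\Lambda U'$ for skew-symmetric $V$ in $\mathfrak{R}_{III}$) to bring $V$ to a canonical diagonal or block-diagonal form whose non-zero moduli are the non-negative singular values $\sigma_{1},\ldots,\sigma_{r}$ of $V$, where $r=\mathrm{rank}(\mathfrak{D})$. Combined with unitary permutation matrices lying in the isotropy, this shows that $\pmb{f}^{2}(V)$ descends to a continuous function of the unordered multiset $\{\sigma_{1}^{2},\ldots,\sigma_{r}^{2}\}$. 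A direct computation with the same decomposition gives
\begin{equation*}
\bigl(h_{\alpha}(V)\bigr)^{\alpha}=\mathrm{tr}\bigl\{(VV^{\ast})^{\alpha}\bigr\}=c\sum_{i=1}^{r}\sigma_{i}^{2\alpha}
\end{equation*}
for an explicit positive constant $c$ (equal to $1$ for $\mathfrak{R}_{I},\mathfrak{R}_{II}$ and to $2$ for $\mathfrak{R}_{III}$). By Newton's identities, the first $r$ power sums $p_{\alpha}=\sum_{i}\sigma_{i}^{2\alpha}$, $\alpha=1,\ldots,r$, generate the algebra of symmetric polynomials in $\sigma_{1}^{2},\ldots,\sigma_{r}^{2}$. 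Consequently, $(h_{1})^{1},\ldots,(h_{r})^{r}$ form a set of polynomial generators for the ring of $\mathrm{Iso}(\mathfrak{D})$-invariant polynomial functions on $T_{0}^{1,0}\mathfrak{D}$.

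The main obstacle is promoting this polynomial statement to the $C^{\infty}$-category, since only continuity is prescribed on the zero vector while smoothness is assumed on $\widetilde{T_{0}^{1,0}\mathfrak{D}}$. For this I would invoke G.~Schwarz's theorem on smooth invariants of compact Lie group actions: any smooth function invariant under such an action is a smooth function of any system of polynomial generators of the invariant ring. Applied to $\pmb{f}^{2}$ on the open set of non-zero vectors, this gives a smooth $\widetilde{g}$ such that
\begin{equation*}
\pmb{f}^{2}(V)=\widetilde{g}\bigl((h_{1}(V))^{1},\ldots,(h_{r}(V))^{r}\bigr).
\end{equation*}
Setting $g(x_{1},\ldots,x_{r}):=\widetilde{g}(x_{1},x_{2}^{2},\ldots,x_{r}^{r})$ yields the representation $\pmb{f}^{2}=g(h_{1},\ldots,h_{r})$ of the theorem with $g$ smooth on $(0,+\infty)^{r}$, since $x_{\alpha}\mapsto x_{\alpha}^{\alpha}$ is smooth. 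Continuity of $g$ on $[0,+\infty)^{r}$ is inherited from the continuity of $\pmb{f}$ and the continuity of $V\mapsto(h_{1}(V),\ldots,h_{r}(V))$ onto its image, while the $1$-homogeneity of $g$ is forced by comparing the $2$-homogeneity under $V\mapsto\lambda V$ of both sides: $\pmb{f}^{2}(\lambda V)=|\lambda|^{2}\pmb{f}^{2}(V)$ and $h_{\alpha}(\lambda V)=|\lambda|^{2}h_{\alpha}(V)$ together give $g(|\lambda|^{2}h_{1},\ldots,|\lambda|^{2}h_{r})=|\lambda|^{2}g(h_{1},\ldots,h_{r})$. Hence one may take $k=\mathrm{rank}(\mathfrak{D})$ in all three cases.
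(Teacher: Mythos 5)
Your proposal is correct and its skeleton coincides with the paper's: you reduce to the isotropy action at the origin via the differentials \eqref{D-I}--\eqref{D-III}, normalize $V$ by the singular value decomposition (resp.\ the Takagi and Youla normal forms for symmetric and skew-symmetric matrices, which are exactly the decompositions the paper quotes from Lu), conclude that $\pmb{f}^2$ descends to a symmetric function of the squared singular values, and pass to the power sums $\mbox{tr}\{(VV^\ast)^\alpha\}$ via Newton's identities. The one genuine divergence is how the central analytic step is handled: the paper asserts rather informally that a continuous (resp.\ smooth) symmetric function ``is a composite function of some symmetric polynomials'' and hence of the power sums, whereas you invoke G.~Schwarz's theorem on smooth invariants of compact group actions. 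Your route is the more rigorous one at precisely this point -- the paper's phrasing is only literally true for polynomial invariants, and promoting it to the $C^\infty$ category on $\widetilde{T_0^{1,0}\mathfrak{D}}$ is exactly the content of Schwarz's theorem (or Glaeser's theorem in the symmetric-function special case); it also lets you pin down $k=\mbox{rank}(\mathfrak{D})$, while the paper leaves $k$ as an unspecified integer $\geq\widetilde{k}$. Two small caveats, which your write-up shares with the paper rather than falling behind it: the function $g$ produced by Schwarz's theorem is only constrained on the image of the Hilbert map $V\mapsto(h_1(V),\dots,h_k(V))$, which is a proper closed subcone of $[0,+\infty)^k$, so the claimed $1$-homogeneity and positivity of $g$ on all of $(0,+\infty)^k$ require a further (routine) homogeneous re-extension off that image; and applying Schwarz's theorem on the invariant open set $\widetilde{T_0^{1,0}\mathfrak{D}}$ rather than on the full representation space uses the open-subset version of the theorem, which is standard but worth citing explicitly.
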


\begin{proof}
A complex norm  $\pmb{f}:T_0^{1,0}\mathfrak{D}\rightarrow [0,+\infty)$ is $\mbox{Iso}(\mathfrak{D})$-invariant if and only if
$$\pmb{f}(V)=\pmb{f}(\varPhi_\ast(V)),\quad \forall V\in T_{0}^{1,0}\mathfrak{D},\forall \varPhi\in\mbox{Iso}(\mathfrak{D}),$$
where $\varPhi_\ast$ denotes the differential of $\varPhi$ at the origin.  It follows from \eqref{p-I}, \eqref{p-II} and \eqref{p-III} that
\begin{eqnarray*}
\mbox{Iso}(\mathfrak{D})=\left\{
                           \begin{array}{ll}
                             \{\varPhi(Z)=AZD^\ast: Z\in\mathfrak{D}, A\in U(m),D\in  U(n)\}, &  \mathfrak{D}=\mathfrak{R}_I, \\
                             \{\varPhi(Z)=AZA': Z\in\mathfrak{D},A\in U(m)\}, &\mathfrak{D}=\mathfrak{R}_{II}, \\
                             \{\varPhi(Z)=AZA': Z\in\mathfrak{D},A\in U(m)\}, & \mathfrak{D}=\mathfrak{R}_{III}.
                           \end{array}
                         \right.
\end{eqnarray*}
Thus by \eqref{D-I},\eqref{D-II} and \eqref{D-III},  $\pmb{f}$ is $\mbox{Iso}(\mathfrak{D})$-invariant if and only if for any $V\in T_0^{1,0}\mathfrak{D}$,  we have
\begin{eqnarray}
\pmb{f}(V)=\left\{
             \begin{array}{ll}
               \pmb{f}(BVC), \quad B\in U(m),C\in U(n),&  \mathfrak{D}=\mathfrak{R}_I,\\
               \pmb{f}(AVA'), \quad A\in U(m),&\mathfrak{D}=\mathfrak{R}_{II}, \\
               \pmb{f}(AVA'), \quad A\in U(m),&\mathfrak{D}=\mathfrak{R}_{III}.
             \end{array}
           \right.
\label{ifv}
\end{eqnarray}

By Theorem 1.1 on page 299 in \cite{Lu}, for each $V\in T_0^{1,0}\mathfrak{R}_I$ (which is a complex $m$-by-$n$ matrix)  there exists a unitary matrix $B\in U(m)$ and a unitary matrix $C\in U(n)$ such that
\begin{equation}
V=B\begin{pmatrix}
      \lambda_1(V) & \cdots & 0 & 0 & \cdots & 0 \\
      \vdots & \ddots & \vdots & \vdots & \ddots & \vdots \\
      0 & \cdots & \lambda_m(V) & 0 & \cdots & 0 \\
    \end{pmatrix}C,\quad (\lambda_1\geq \cdots\geq \lambda_m\geq 0),\label{VI}
\end{equation}
where $\lambda_1^2(V),\cdots,\lambda_m^2(V)$ are the eigenvalues of the $m$-by-$m$ Hermitian matrix $VV^\ast$.
By Theorem 1.2 on page 301 in \cite{Lu}, for each $V\in T_0^{1,0}\mathfrak{R}_{II}$(which is a complex symmetric matrix of order $m$) there exists a unitary matrix $A\in U(m)$ such that
\begin{equation}
V=A\begin{pmatrix}
     \lambda_1(V) & 0 & \cdots & 0 \\
     0 & \lambda_2(V) & \cdots & 0 \\
     \vdots & \vdots & \vdots & \vdots \\
     0 & 0 & \cdots & \lambda_m(V) \\
   \end{pmatrix}A',\quad (\lambda_1\geq \lambda_2\geq \cdots\geq \lambda_m\geq 0),\label{VII}
\end{equation}
where $\lambda_1^2(V),\cdots,\lambda_m^2(V)$ are the eigenvalues of the $m$-by-$m$ Hermitian matrix $VV^\ast$.
By Theorem 1.3 on page 303 in \cite{Lu}, for each $V\in T_0^{1,0}\mathfrak{R}_{III}$ (which is a complex skew-symmetric matrix of order $m$) there exists a unitary matrix $A\in U(m)$ such that
\begin{equation}
V=\left\{
			\begin{array}{ll}
				A\left[\begin{pmatrix}
					0 & \lambda_1 \\
					-\lambda_1 & 0 \\
				\end{pmatrix}
				\dot{+}\cdots\dot{+}
				\begin{pmatrix}
					0 & \lambda_\nu \\
					-\lambda_\nu & 0 \\
				\end{pmatrix}\right]A', & m=2\nu \\
				A\left[\begin{pmatrix}
					0 & \lambda_1 \\
					-\lambda_1 & 0 \\
				\end{pmatrix}
				\dot{+}\cdots\dot{+}
				\begin{pmatrix}
					0 & \lambda_\nu \\
					-\lambda_\nu & 0 \\
				\end{pmatrix}\dot{+}0\right]A',& m=2\nu+1,
			\end{array}
			\right.
\label{VIII}
\end{equation}
where $\lambda_1^2(V)\geq \cdots\geq \lambda_{[\frac{m}{2}]}^2(V)\geq 0$ are the eigenvalues of the $m$-by-$m$ Hermitian matrix $VV^\ast$ and $\dotplus$ denotes the direct sum of matrices.

It follows from \eqref{ifv}-\eqref{VIII} that there  exists a positive integer $\widetilde{k}$ and a continuous function
$$ \widetilde{g}:[0,+\infty)^{\widetilde{k}}=\underbrace{[0,+\infty)\times \cdots\times[0,+\infty)}_{\widetilde{k}}\rightarrow[0,+\infty)$$ such that
$$
\pmb{f}^2(V)=\widetilde{g}\left(\lambda_1^2(V),\cdots,\lambda_{\widetilde{k}}^2(V)\right),\quad \forall V\in T_0^{1,0}\mathfrak{D}
$$
where
$$
\widetilde{k}=\left\{
             \begin{array}{ll}
               m, & \hbox{for}\;A=I, II,\\
               \left[\frac{m}{2}\right], & \hbox{for}\;A=III.
             \end{array}
           \right.
$$
Note that $\widetilde{g}$ is a symmetric function of $\lambda_1^2(V), \lambda_2^2(V),\cdots,\lambda_{\widetilde{k}}^2(V)$: namely
 \begin{equation}
\widetilde{g}\left(\lambda_{\tau(1)}^2(V),\lambda_{\tau(2)}^2(V),\cdots,\lambda_{\tau(\widetilde{k})}^2(V)\right)=\tilde{g}\left(\lambda_1^2(V),\lambda_2^2(V),\cdots,\lambda_{\widetilde{k}}^2(V)\right)\label{h-g}
\end{equation}
for any permutation $\tau$ of $\left\{1,2,\cdots,\widetilde{k}\right\}$.
Thus $\widetilde{g}$ is a composite function of some symmetric polynomials $\widetilde{\phi}_1,\cdots,\widetilde{\phi}_\ell$ of $\lambda_1^2(V),\cdots,\lambda_{\widetilde{k}}^2(V)$ for some $\ell\in\mathbb{N}^+$, therefore $\widetilde{\phi}_1,\cdots,\widetilde{\phi}_\ell$ can also be expressed as polynomials of  the following elementary symmetric polynomials:
$$
\sigma_1:=\sum_{1\leq i\leq \widetilde{k}}\lambda_i^2(V),\quad\sigma_2:=\sum_{1\leq i<j\leq \widetilde{k}}\lambda_i^2(V)\lambda_j^2(V),\cdots \cdots,
\sigma_{\widetilde{k}}:=\lambda_1^2(V)\cdots \lambda_{\widetilde{k}}^2(V).
$$
If we define
$$
S_\alpha(V):=\sum_{j=1}^{\widetilde{k}}\left(\lambda_j^2(V)\right)^\alpha
$$
for each positive integer $\alpha\in\mathbb{N}^+$, then by the following well-known Newton's identities of elementary symmetric polynomials:
\begin{eqnarray*}
S_\alpha-S_{\alpha-1}\sigma_1+S_{\alpha-2}\sigma_2-\cdots+(-1)^{\alpha-1}S_1\sigma_{\alpha-1}+(-1)^\alpha \alpha\sigma_\alpha&=&0,\quad 1\leq \alpha\leq \widetilde{k},\\
S_\alpha-S_{\alpha-1}\sigma_1+S_{\alpha-2}\sigma_2-\cdots+(-1)^{\widetilde{k}}S_{\alpha-\widetilde{k}}\sigma_{\widetilde{k}}&=&0,\quad \alpha>\widetilde{k},
\end{eqnarray*}
 the functions $\widetilde{\phi}_1,\cdots,\widetilde{\phi}_\ell$ can  be expressed as polynomials in
$S_1,S_2,\cdots,S_\alpha$ for some $\alpha\in\mathbb{N}^+$ (see also Page 36 in \cite{Range}).
Thus $\widetilde{g}$ can actually be looked as a composite functions of $S_1,S_2,\cdots, S_\alpha$ for some $\alpha\in\mathbb{N}^+$.
Notice that
$$
S_\alpha(V)=\left(\lambda_1^2(V)\right)^\alpha+\cdots+\left(\lambda_{\widetilde{k}}^2(V)\right)^2=\mbox{tr}\left\{(VV^\ast)^\alpha\right\},
$$
which implies that $S_\alpha(V)=0$ if and only if $V=\pmb{0}$ for any $\alpha\in\mathbb{N}^+$.
Since $\pmb{f}$ is continuous on $T_0^{1,0}\mathfrak{D}$ and smooth on $\widetilde{T_0^{1,0}\mathfrak{D}}$ and satisfies $\pmb{f}(\lambda V)=\mid\lambda\mid\pmb{f}(V)$ for any $\lambda\in\mathbb{C}$, thus there exists some positive integer $k$ such that $\widetilde{g}$ is a composite function of
 \begin{eqnarray}
\xi_\alpha(V):&=&\sqrt[\alpha]{S_\alpha(V)}=\sqrt[\alpha]{\mbox{tr}\left\{(VV^\ast)^\alpha\right\}}\label{xia}
\end{eqnarray}
for $\alpha=1,\cdots,k$ since each $\xi_\alpha(V)$ is continuous on $T_{0}^{1,0}\mathfrak{D}$, smooth and positive on $\widetilde{T_0^{1,0}\mathfrak{D}}$ and satisfies $S_\alpha(\lambda V)=\mid\lambda\mid S_\alpha(V)$ for any $\lambda\in\mathbb{C}$.
So that there exists a positive integer $k (\geq \widetilde{k})$ and a continuous $1$-homogeneous function $g:[0,+\infty)^{k}\rightarrow [0,+\infty)$ which is smooth and positive on $(0,+\infty)^{k}$ such that $\tilde{g}=g(\xi_1(V),\cdots,\xi_{k}(V))$ with $\xi_\alpha(V)$ defined by \eqref{xia}. This completes the proof.
\end{proof}

The following proposition shows that there are lots of $\mbox{Iso}(\mathfrak{R}_A)$-invariant complex norms on $T_0^{1,0}\mathfrak{R}_A$ which are strongly pseudoconvex.

\begin{proposition}\label{prop-1}
Let $g(\xi_1,\cdots,\xi_{k})$ be the function in Theorem \ref{thm-1} such that
\begin{equation}\pmb{f}^2(V)=g\left(h_1(V),\cdots,h_{k}(V)\right)\label{f2}
\end{equation}
with
$$
h_\alpha(V):=\sqrt[\alpha]{\mbox{tr}\left\{(VV^\ast)^\alpha\right\}},\quad \forall V\in T_0^{1,0}\mathfrak{D}\quad\mbox{for}\quad\alpha=1,\cdots,k.
$$
If
\begin{equation}
\left(\frac{\partial^2 g}{\partial \xi_\alpha\partial \xi_\beta}\right)\geq 0\quad
\mbox{and}\quad \frac{\partial g}{\partial \xi_\alpha}>0,\quad \forall \alpha=1,\cdots,k.
\label{scc}
\end{equation}
Then $\pmb{f}$ is an $\mbox{Iso}(\mathfrak{D})$-invariant complex Minkowski norm.
\end{proposition}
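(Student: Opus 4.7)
The plan is to verify the three defining properties of an $\mbox{Iso}(\mathfrak{D})$-invariant complex Minkowski norm for $\pmb{f}$: (a) $\mbox{Iso}(\mathfrak{D})$-invariance, (b) $\mathbb{C}$-homogeneity $\pmb{f}(\lambda V)=|\lambda|\pmb{f}(V)$ together with strict positivity on $\widetilde{T_0^{1,0}\mathfrak{D}}$, and (c) strong pseudoconvexity of $\pmb{f}^2$. Properties (a) and (b) fall out of the construction: each $h_\alpha$ is unchanged under the $\mbox{Iso}(\mathfrak{D})$-action by \eqref{ifv}, and $h_\alpha(\lambda V)=|\lambda|^2 h_\alpha(V)$ combined with the $1$-homogeneity of $g$ yields $\pmb{f}^2(\lambda V)=|\lambda|^2\pmb{f}^2(V)$; Euler's identity $g=\sum_\alpha\xi_\alpha(\partial g/\partial\xi_\alpha)$ together with $\partial g/\partial\xi_\alpha>0$ and $h_\alpha(V)>0$ for $V\neq \pmb{0}$ gives $\pmb{f}(V)>0$ off the zero vector.

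The substantive content is (c). Applying the chain rule to \eqref{f2} yields
\[
\frac{\partial^2 \pmb{f}^2}{\partial v_a\partial\overline{v_b}}=\sum_{\alpha,\beta=1}^{k}\frac{\partial^2 g}{\partial\xi_\alpha\partial\xi_\beta}\frac{\partial h_\alpha}{\partial v_a}\frac{\partial h_\beta}{\partial\overline{v_b}}+\sum_{\alpha=1}^{k}\frac{\partial g}{\partial\xi_\alpha}\frac{\partial^2 h_\alpha}{\partial v_a\partial\overline{v_b}}.
\]
Contracting against an arbitrary $W=(W_a)\neq \pmb{0}$ and setting $Y_\alpha:=\sum_a W_a\,\partial h_\alpha/\partial v_a$, the reality of $h_\alpha$ gives $\overline{\partial h_\alpha/\partial v_a}=\partial h_\alpha/\partial\overline{v_a}$, so the first double sum becomes $\sum_{\alpha,\beta}(\partial^2 g/\partial\xi_\alpha\partial\xi_\beta)\,Y_\alpha\overline{Y_\beta}\geq 0$ directly from the positive semi-definite hypothesis in \eqref{scc}. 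In the second sum, direct differentiation of $h_1(V)=\mbox{tr}(VV^\ast)=\sum_{ij}|v_{ij}|^2$ gives $\partial^2 h_1/\partial v_a\partial\overline{v_b}=\delta_{ab}$, so the $\alpha=1$ piece contributes $(\partial g/\partial\xi_1)|W|^2$, which is strictly positive. It therefore suffices to establish that the Levi form of $h_\alpha$ is positive semi-definite for each $\alpha\geq 2$, since the remaining terms carry the positive weights $\partial g/\partial\xi_\alpha>0$.

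For this last step the clean observation is $h_\alpha(V)=\|V\|_{2\alpha}^2$ with $\|\cdot\|_{2\alpha}$ the Schatten $2\alpha$-norm: indeed, $\mbox{tr}((VV^\ast)^\alpha)=\sum_i\sigma_i(V)^{2\alpha}$, which via the spectral decompositions \eqref{VI}, \eqref{VII}, \eqref{VIII} used in the proof of Theorem \ref{thm-1} gives $h_\alpha=\|V\|_{2\alpha}^2$ uniformly across $\mathfrak{D}=\mathfrak{R}_I,\mathfrak{R}_{II},\mathfrak{R}_{III}$. Since $\|\cdot\|_{2\alpha}$ is a genuine norm for $2\alpha\geq 2$, the restriction of its square to the ambient complex matrix space (and hence to the symmetric or skew-symmetric subspace) is convex on the underlying real vector space; real convexity on $\mathbb{C}^N\cong\mathbb{R}^{2N}$ implies plurisubharmonicity, since the Laplacian of a $C^2$ function along any complex line equals the trace of the restricted real Hessian, which is non-negative whenever that Hessian is positive semi-definite. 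Summing the three non-negative contributions and using the strictly positive $\alpha=1$ piece then delivers strong pseudoconvexity of $\pmb{f}^2$. The main obstacle in this outline is precisely the plurisubharmonicity of the squared Schatten norms on the symmetric and skew-symmetric subspaces: once this classical matrix-analytic input is supplied, the rest is a routine application of the two inequalities in \eqref{scc}.
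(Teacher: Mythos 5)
Your proposal is correct, and its overall skeleton coincides with the paper's: both apply the chain rule to $\pmb{f}^2=g(h_1,\dots,h_k)$, dispose of the $\sum_{\alpha,\beta}g_{\xi_\alpha\xi_\beta}$--term using the positive semi-definiteness of the Hessian of $g$, extract strict positivity from the $\alpha=1$ term, and reduce everything to showing that the Levi form of each $h_\alpha$ is positive semi-definite. Where you genuinely diverge is in that last reduction. The paper computes the Levi form of $h_\alpha$ explicitly (its formula \eqref{v-c}) and verifies term-by-term non-negativity by invoking a Cauchy--Schwarz-type trace inequality from Proposition 3.5 of \cite{GZ}, namely
$\mbox{tr}[(VV^\ast)^\alpha]\,\mbox{tr}[W(V^\ast V)^{i+1}W^\ast(VV^\ast)^{\alpha-i-2}]\geq \mbox{tr}[VW^\ast(VV^\ast)^{\alpha-1}]\,\mbox{tr}[WV^\ast(VV^\ast)^{\alpha-1}]$.
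You instead observe that $h_\alpha(V)=\|V\|_{2\alpha}^2$ is the square of the Schatten $2\alpha$-norm, hence convex on the underlying real vector space (and on the symmetric and skew-symmetric subspaces), hence plurisubharmonic where it is $C^2$, i.e.\ off the origin --- which is all that is needed. This is a softer and shorter route; its one external input, the triangle inequality for Schatten $p$-norms with $p\geq 1$, is classical (von Neumann's trace inequality), so the appeal is legitimate, though you should cite it rather than leave it as an ``obstacle.'' What the paper's harder computation buys is the explicit formula \eqref{v-c}, which is structurally parallel to the curvature computations reused later in the proof of Theorem \ref{thm-2}; what your argument buys is conceptual transparency and independence from the specific trace inequality of \cite{GZ}. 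You also verify invariance, homogeneity and positivity explicitly, which the paper treats as immediate; that part is routine but harmless.
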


\begin{proof}We only prove the case $\mathfrak{D}=\mathfrak{R}_I$, the proofs for $\mathfrak{D}=\mathfrak{R}_{II}$ and $\mathfrak{R}_{III}$ go essentially the same lines. Suppose that $g$ satisfies \eqref{scc}. It suffices to show that the complex norm $\pmb{f}$ defined by \eqref{f2} satisfies
\begin{equation}
\sum_{i,a=1}^m\sum_{j,b=1}^n\frac{\partial^2\pmb{f}^2}{\partial V_{ij}\partial\overline{V_{ab}}}W_{ij}\overline{W_{ab}}\geq 0\label{SC}
\end{equation}
for any $W=(W_{ij})\in T_{0}^{1,0}\mathfrak{R}_I \cong\mathscr{M}(m,n)$ with $W\neq \pmb{0}$ and equality holds if and only if $W=\pmb{0}$.

Indeed, we have
\begin{equation}
\frac{\partial^2\pmb{f}^2}{\partial V_{ij}\partial\overline{V_{ab}}}=\sum_{\alpha,\beta=1}^{k}\frac{\partial^2g}{\partial \xi_\alpha\partial\xi_\beta}\frac{\partial h_\alpha}{\partial V_{ij}}\frac{\partial h_\beta}{\partial \overline{V_{ab}}}+\sum_{\alpha=1}^{k}\frac{\partial g}{\partial \xi_\alpha}\frac{\partial^2h_\alpha}{\partial V_{ij}\partial\overline{V_{ab}}},\label{fij}
\end{equation}
and a direct computation gives
\begin{eqnarray}
\sum_{i=1}^m\sum_{j=1}^n\frac{\partial h_\alpha}{\partial V_{ij}}W_{ij}&=&\left\{\mbox{tr}\left[(VV^\ast)^\alpha\right]\right\}^{\frac{1}{\alpha}-1} \mbox{tr}\left[WV^\ast(VV^\ast)^{\alpha-1}\right],\label{v-a}\\
\sum_{a=1}^m\sum_{b=1}^n\frac{\partial h_\beta}{\partial \overline{V_{ab}}}\overline{W_{ab}}&=&\left\{\mbox{tr}\left[(VV^\ast)^\beta\right]\right\}^{\frac{1}{\beta}-1} \mbox{tr}\left[VW^\ast(VV^\ast)^{\beta-1}\right]\label{v-b}
\end{eqnarray}
and
\begin{eqnarray}
&&\sum_{i,a=1}^m\sum_{j,b=1}^n\frac{\partial^2h_{\alpha}}{\partial V_{ij}\partial\overline{V_{ab}}}W_{ij}\overline{W_{ab}}\nonumber\\
&=&\left\{\mbox{tr}\left[(VV^\ast)^\alpha\right]\right\}^{\frac{1}{\alpha}-1}\mbox{tr}\left[WW^\ast(VV^\ast)^{\alpha-1}\right]\nonumber\\
&&+\{\mbox{tr}[(VV^\ast)^\alpha]\}^{\frac{1}{\alpha}-2}\sum_{i=0}^{\alpha-2}\Bigg\{\mbox{tr}\left[(VV^\ast)^\alpha\right]
\mbox{tr}\left[W(V^\ast V)^{i+1}W^\ast(VV^\ast)^{\alpha-i-2}\right]\nonumber\\
&&-\mbox{tr}\left[VW^\ast(VV^\ast)^{\alpha-1}\right]\mbox{tr}\left[WV^\ast(VV^\ast)^{\alpha-1}\right]\Bigg\}.\label{v-c}
\end{eqnarray}

If $k=1$, we have $\pmb{f}^2(V)=c\cdot \mbox{tr}\{VV^\ast\}$ with $c:=g(1)>0$, $\pmb{f}$ is clear a complex Minkowski norm. If $k\geq 2$, then by (3.6) in Proposition 3.5 in \cite{GZ}, we have
\begin{eqnarray*}
&&\mbox{tr}[(VV^\ast)^\alpha]
\mbox{tr}[W(V^\ast V)^{i+1}W^\ast(VV^\ast)^{\alpha-i-2}]\\
&&-\mbox{tr}[VW^\ast(VV^\ast)^{\alpha-1}]\mbox{tr}[WV^\ast(VV^\ast)^{\alpha-1}]\geq 0
\end{eqnarray*}
for any fixed integer $\alpha\in\{1,\cdots,k\}$ and $i\in\{0,1,\cdots,k-2\}$.
Notice that $\mbox{tr}[(VV^\ast)^\alpha]\geq 0$ and $\mbox{tr}[WW^\ast(VV^\ast)^{\alpha-1}]\geq 0$, these together with the assumptions \eqref{scc} implies  the inequality \eqref{SC}.
This completes the proof.
\end{proof}

\begin{remark}\label{rem-a}
If $\pmb{f}$ is an $\mbox{Iso}(\mathfrak{D})$-invariant complex Minkowski norm which is also Hermitian quadratic, then it necessary that
$\frac{\partial g}{\partial \xi_\alpha}=0$ for $\alpha=2,\cdots,k$. In this case, we have $\pmb{f}^2(V)=c\cdot \mbox{tr}\{VV^\ast\}$ for any $V\in T_0^{1,0}\mathfrak{D}$ and $c:=g(1)$ is a positive constant.
If $\pmb{f}$ is an $\mbox{Iso}(\mathfrak{D})$-invariant complex Minkowski norm which is non-Hermitian quadratic, we must have
$\pmb{f}^2(V)=\xi_1(V)\psi(V)$ for any $V\in \widetilde{T_0^{1,0}\mathfrak{D}}$ with
\begin{equation}
\psi(V):=g\left(1,\frac{\xi_2(V)}{\xi_1(V)},\cdots,\frac{\xi_{k}(V)}{\xi_1(V)}\right).
\end{equation}
 It is clear that $\psi(\lambda V)=\psi(V)$ for any $\lambda\in\mathbb{C}^\ast$ and $V\in \widetilde{T_0^{1,0}\mathfrak{D}}$, namely $\psi(V)$ is actually a smooth function defined on $PT_0^{1,0}\mathfrak{D}\cong\mathbb{CP}^{r-1}$ with $r=\dim\,\mathfrak{D}$. Thus there exists two positive constant
$$c_1=\min_{V\in PT_0^{1,0}\mathfrak{D}}\psi(V)\quad\mbox{and}\quad c_2=\max_{V\in PT_0^{1,0}\mathfrak{D}}\psi(V)$$
such that
\begin{equation}
c_1\cdot \mbox{tr}\{VV^\ast\}\leq \pmb{f}^2(V)\leq c_2\cdot \mbox{tr}\{VV^\ast\},\quad \forall V\in T_0^{1,0}\mathfrak{D}.\label{c1c2}
\end{equation}
\end{remark}

For simplicity, in the following if $\pmb{f}:T_0^{1,0}\mathfrak{D}\rightarrow [0,+\infty)$ is the associated complex Minkowski norm of a strongly pseudoconvex complex Finsler metric $F:T^{1,0}\mathfrak{D}\rightarrow[0,+\infty)$, we always assume that the corresponding function $g$  of $\pmb{f}$ satisfies the condition \eqref{scc}.

The following theorem gives a complete characterizations of $\mbox{Aut}(\mathfrak{D})$-invariant strongly pseudoconvex complex Finsler metrics on $\mathfrak{D}=\mathfrak{R}_I,\mathfrak{R}_{II},\mathfrak{R}_{III}$.

\begin{theorem}\label{thm-2}
Let $F:T^{1,0}\mathfrak{D}\rightarrow[0,+\infty)$ be an $\mbox{Aut}(\mathfrak{D})$-invariant strongly pseudoconvex complex Finsler metric on a classical domain $\mathfrak{D}$ ($\mathfrak{D}=\mathfrak{R}_I,\mathfrak{R}_{II},\mathfrak{R}_{III}$).
Then we have the following assertions:

(1) There exists a positive integer $k\in\mathbb{N}^+$ and a continuous $1$-homogeneous function $g:[0,+\infty)^{k}\rightarrow [0,+\infty)$ which is smooth and positive on $(0,+\infty)^k$ such that
\begin{equation}
F^2(Z;V)=g\left(\widetilde{h}_1(Z;V),\cdots, \widetilde{h}_{k}(Z;V)\right),\quad \forall (Z;V)\in T^{1,0}\mathfrak{D}\label{FIb}
\end{equation}
with
\begin{equation}
\widetilde{h}_\alpha(Z;V):=\sqrt[\alpha]{\mbox{tr}\left\{\left[(I-ZZ^\ast)^{-1}V(I-Z^\ast Z)^{-1}V^\ast\right]^\alpha\right\}},\quad \alpha=1,\cdots,k.
\label{tha}
\end{equation}

In particular, if $F$ is  Hermitian quadratic,  then
\begin{equation}
F^2(Z;V)=c\cdot\mbox{tr}\left[(I-ZZ^\ast)^{-1}V(I-Z^\ast Z)^{-1}V^\ast\right], \quad \forall (Z;V)\in T^{1,0}\mathfrak{D}\label{itha}
\end{equation}
namely  $F$ is a constant multiple of the Bergman metric on $\mathfrak{D}$.

(2) $F$ is a K\"ahler-Finsler metric. More precisely $F$ is a  complete K\"ahler-Berwald metric on $\mathfrak{D}$.

(3) The holomorphic sectional curvature $K_{F}$ of $F$ is bounded below and above by negative constants, respectively. That is, there exists constants $-K_1$ and $-K_2$ such that
\begin{equation}
-K_1\leq K_{F}(Z;V)\leq -K_2<0,\quad \forall (Z; V)\in \widetilde{T^{1,0}\mathfrak{D}}.\label{HSC-I}
\end{equation}
The holomorphic bisectional curvature $B_{F}$ of $F$ is  non-positive and bounded below. That is, there exists a constant $-C$ such that
\begin{equation}
-C\leq B_{F}(Z;V,W)\leq 0,\quad \forall(Z;V),(Z;W)\in\widetilde{T^{1,0}\mathfrak{D}}.\label{BHSC-I}
\end{equation}
\end{theorem}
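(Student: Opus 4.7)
My proof proposal proceeds in three stages, each leveraging the homogeneity of $\mathfrak{D}$.

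\textbf{Proof of (1).} I define $\pmb{f}(V) := F(0; V)$, which is $\mbox{Iso}(\mathfrak{D})$-invariant since $\mbox{Iso}(\mathfrak{D})$ fixes the origin and $F$ is $\mbox{Aut}(\mathfrak{D})$-invariant. Theorem \ref{thm-1} then supplies the continuous $1$-homogeneous function $g$ and integer $k$ with $\pmb{f}^2(V) = g(h_1(V), \ldots, h_k(V))$. For any $Z \in \mathfrak{D}$, the map $\varPhi_Z$ sends $Z$ to $0$, and invariance gives $F(Z;V) = \pmb{f}((\varPhi_Z)_*(V))$. The heart of the computation is the identity $h_\alpha((\varPhi_Z)_*(V)) = \widetilde{h}_\alpha(Z; V)$. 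Using \eqref{D-I} with $A^*A = (I - ZZ^*)^{-1}$ and $D^*D = (I - Z^*Z)^{-1}$ for $\mathfrak{R}_I$ (and the analogous formulas for $\mathfrak{R}_{II}, \mathfrak{R}_{III}$), one computes $(\varPhi_Z)_*(V)[(\varPhi_Z)_*(V)]^* = AV(I-Z^*Z)^{-1}V^*A^*$, and then cyclic invariance of the trace plus $A^*A = (I-ZZ^*)^{-1}$ yields $\mbox{tr}\{[(\varPhi_Z)_*(V)[(\varPhi_Z)_*(V)]^*]^\alpha\} = \mbox{tr}\{[(I-ZZ^*)^{-1}V(I-Z^*Z)^{-1}V^*]^\alpha\}$. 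This gives \eqref{FIb}, and the Hermitian quadratic case \eqref{itha} follows from Remark \ref{rem-a}.

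\textbf{Proof of (2).} I aim to verify the Kähler-Berwald properties at the origin, then transfer everywhere via invariance. Expanding $M := (I-ZZ^*)^{-1}V(I-Z^*Z)^{-1}V^*$ as a power series in $(Z, \bar Z)$, the crucial observation is that $\partial(ZZ^*)/\partial Z_{ij}|_{Z=0} = 0$ and $\partial(Z^*Z)/\partial Z_{ij}|_{Z=0} = 0$, because each factor carries a $\bar Z$ that vanishes at the origin. Hence $\partial G/\partial Z_{ij}|_{Z=0} \equiv 0$ as a function of $V$, so the nonlinear Chern-Finsler coefficients satisfy $\varGamma^l_{;i}(0; V) = 0$, whence $\varGamma^l_{j;i}(0; V) = 0$ for all $V$. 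This makes both Kähler and Berwald conditions trivial at the origin, and transitive $\mbox{Aut}(\mathfrak{D})$-invariance pulls these tensorial properties back to each point: the Chern-Finsler connection coefficients at $Z$ are governed by the derivatives of $\varPhi_Z^{-1}$, which depend on $Z$ alone, preserving both $V$-independence and $(i,j)$-symmetry. For completeness, the estimates $c_1\,\mbox{tr}(VV^*) \leq \pmb{f}^2(V) \leq c_2\,\mbox{tr}(VV^*)$ from Remark \ref{rem-a}, combined with invariance, yield $c_1 F_B^2(Z; V) \leq F^2(Z;V) \leq c_2 F_B^2(Z;V)$ globally on $\mathfrak{D}$, where $F_B$ is the Bergman metric; completeness of $F_B$ on $\mathfrak{D}$ then forces completeness of $F$.

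\textbf{Proof of (3).} Compute $\partial^2 G / (\partial Z_i \partial \bar Z_j)|_{Z=0}$ using the second-order expansion of $M$, and substitute into the standard Chern-Finsler formulas for $K_F(Z;V)$ and $B_F(Z;V,W)$. At the origin, these formulas become rational expressions in $V, \bar V$ (and $W, \bar W$) involving $g$ and its first two derivatives. Because $K_F$ and $B_F$ are degree-zero in the fibre directions, they descend to continuous functions on the compact projective spaces $PT_0^{1,0}\mathfrak{D}$ and $PT_0^{1,0}\mathfrak{D} \times PT_0^{1,0}\mathfrak{D}$, hence attain uniform upper and lower bounds; invariance then delivers the same bounds at every point of $\mathfrak{D}$. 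The sign analysis uses that the quadratic terms in the expansion of $(I-ZZ^*)^{-1}$ and $(I-Z^*Z)^{-1}$ are positive semidefinite, combined with the strong pseudoconvexity assumption \eqref{scc} on $g$, to force $K_F(0;V) < 0$ strictly and $B_F(0;V,W) \leq 0$.

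The main obstacle I anticipate is the last step: verifying that for an arbitrary admissible $g$ (not a specific example as in \cite{GZ}), the curvature expressions remain strictly negative for holomorphic sectional curvature and non-positive for holomorphic bisectional curvature. The computation must juggle the trace-polynomial structure of the $\widetilde{h}_\alpha$'s, the chain rule derivatives through $g$, and the matrix inequality in Proposition \ref{prop-1} (based on (3.6) of \cite{GZ}), and I expect this is where the bulk of the case-by-case work for $\mathfrak{R}_I, \mathfrak{R}_{II}, \mathfrak{R}_{III}$ will lie.
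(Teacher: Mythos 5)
Your proposal is correct and follows essentially the same route as the paper: reduce to the origin by homogeneity and Theorem \ref{thm-1} for (1) (with the same cyclic-trace computation giving $h_\alpha((\varPhi_Z)_\ast(V))=\widetilde{h}_\alpha(Z;V)$), verify the vanishing of $\frac{\partial^2 F^2}{\partial Z_{ij}\partial\overline{V_{ab}}}(0;V)$ and transfer by invariance for (2), and compute the curvatures at the origin plus compactness of $PT_0^{1,0}\mathfrak{D}$ and homogeneity for (3), where the first-derivative vanishing $\frac{\partial\widetilde h_\alpha}{\partial Z_{ij}}(0;V)=0$ kills the second-derivative-of-$g$ terms and the sign analysis reduces to $\frac{\partial g}{\partial\xi_\alpha}>0$ together with $\mbox{tr}\{WW^\ast(VV^\ast)^\alpha\}\geq 0$. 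The only notable deviation is that you derive completeness from the two-sided comparison with the Bergman metric, whereas the paper observes that the Chern-Finsler connection coefficients, hence the geodesics, coincide with those of the Bergman metric; both arguments are valid.
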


\begin{proof}  (1) Since $\mathfrak{R}_I$ is a homogeneous domain, it suffices to determine the expression of $F(Z_0;V)$ for any fixed point $Z_0\in\mathfrak{R}_I$ and any tangent vector $V\in T_{Z_0}^{1,0}\mathfrak{R}_I$.

By Theorem 2.3 in \cite{GZ}, there exists an $\mbox{Iso}(\mathfrak{R}_I)$-invariant complex Minkowski norm $\pmb{f}:T_0^{1,0}\mathfrak{R}_I\rightarrow [0,+\infty)$ such that
\begin{equation}
F(Z_0;V)=\pmb{f}((\varPhi_{Z_0})_\ast(V)),\label{FZ0V}
\end{equation}
where $\varPhi_{Z_0}\in\mbox{Aut}(\mathfrak{R}_I)$ satisfies $\varPhi_{Z_0}(Z_0)=0$ and $(\varPhi_{Z_0})_\ast$ denotes the differential of $\varPhi$ at the point $Z_0\in\mathfrak{R}_I$.
Note that for any chosen $\pmb{f}$, \eqref{FZ0V} is independent of the choice of $\varPhi_{Z_0}\in\mbox{Aut}(\mathfrak{R}_I)$ with $\varPhi_{Z_0}(Z_0)=0$. Thus by \eqref{p-I}, we have
\begin{equation}
(\varPhi_{Z_0})_\ast(V)=AV(I_n-Z_0^\ast Z_0)^{-1}D^{-1},\quad \forall V\in T_{Z_0}^{1,0}\mathfrak{R}_I,\label{PVI}
\end{equation}
where $A$ and $D$ satisfy \eqref{p-Ia}.

By Theorem \ref{thm-1}, there exists a positive integer $k\in\mathbb{N}^+$ and a  continuous $1$-homogeneous function $g:[0,+\infty)^{k}\rightarrow [0,+\infty)$ which is smooth and positive on $(0,+\infty)^{k}$ such that
 \begin{equation}
\pmb{f}^2(V)=g\left(h_1(V),\cdots,h_{k}(V)\right)\label{f2V}
\end{equation}
with
\begin{equation}
h_\alpha(V):=\sqrt[\alpha]{\mbox{tr}\left\{(VV^\ast)^\alpha\right\}},\quad\alpha=1,\cdots,k.\label{hlV}
\end{equation}
Setting $\widetilde{h}_\alpha(Z_0;V):=h_\alpha((\varPhi_{Z_0})_\ast(V))$ for $\alpha=1,\cdots,k$,  and using \eqref{FZ0V}-\eqref{f2V},  we obtain
\begin{equation}
F^2(Z_0;V)=g\left(\widetilde{h}_1(Z_0;V),\cdots, \widetilde{h}_{k}(Z_0;V)\right)\label{FZ0}
\end{equation}
with
$$
\widetilde{h}_\alpha(Z_0;V)=\sqrt[\alpha]{\mbox{tr}\left\{\left[I_m-Z_0Z_0^\ast)^{-1}V(I_n-Z_0^\ast Z_0)^{-1}V^\ast\right]^\alpha\right\}}
$$
 for $\alpha=1,\cdots,k$. Since \eqref{FZ0} holds for any fixed $Z_0\in\mathfrak{R}_I$ and any tangent vector $V\in T_{Z_0}^{1,0}\mathfrak{R}_I$,
changing $Z_0$ into $Z$ if necessary, we obtain \eqref{FIb} and \eqref{tha}.

If $k=1$, then \eqref{FIb} reduces to
\begin{equation}
F^2(Z;V)=c\cdot \mbox{tr}\left\{(I_m-ZZ^\ast)^{-1}V(I_n-Z^\ast Z)^{-1}V^\ast\right\}
\end{equation}
with $c:=g(1)>0$, namely $F$ is a constant multiple of the Bergman metric on $\mathfrak{R}_I$.

If $k\geq 2$ and $\frac{\partial g}{\partial \xi_\alpha}\neq 0$ for some $\alpha\in\{2,\cdots,k\}$, then $F^2(Z;V)$ is non-Hermitian quadratic with respect to $V\in T_{Z}^{1,0}\mathfrak{R}_I$.

(2) By Theorem 2.4 in  \cite{GZ}, in order to show that every $\mbox{Aut}(\mathfrak{R}_I)$-invariant strongly pseudoconvex complex Finsler metric $F$ is necessary a K\"ahler-Berwald metric, it suffices to show that at the point $(0;V)\in T^{1,0}\mathfrak{R}_I$ with $V\neq \pmb{0}$, we have
\begin{equation}
\frac{\partial^2F^2}{\partial Z_{ij}\partial \overline{V_{ab}}}(0;V)=0,\quad \forall i,a=1,\cdots,m; j,b=1,\cdots,n.\label{F0V}
\end{equation}

Indeed, since we have
\begin{eqnarray*}
\frac{\partial^2F^2}{\partial Z_{ij}\partial \overline{V_{ab}}}
&=&\sum_{\alpha,\beta=1}^{k}\frac{\partial^2g}{\partial \xi_\alpha\partial \xi_\beta}\frac{\partial \widetilde{h}_\alpha}{\partial Z_{ij}}\frac{\partial \widetilde{h}_\beta}{\partial \overline{V_{ab}}}
+\sum_{\alpha,\beta=1}^{k}\frac{\partial g}{\partial \xi_\alpha}\frac{\partial^2 \widetilde{h}_\alpha}{\partial Z_{ij}\partial\overline{V_{ab}}}.
\end{eqnarray*}
At the point $(0;V)$ with $V\neq \pmb{0}$, it is easy to check that
\begin{eqnarray*}
\frac{\partial \widetilde{h}_\alpha}{\partial Z_{ij}}(0;V)&=&0,\quad
\frac{\partial^2 \widetilde{h}_\alpha}{\partial Z_{ij}\partial \overline{V_{ab}}}(0;V)=0,\quad \forall i,a=1,\cdots,m; j,b=1,\cdots,n,
\end{eqnarray*}
 which implies \eqref{F0V}.  Furthermore, it is easy to check that the horizontal Chern-Finsler connection coefficients $\varGamma_{j;k}^l$ associated to $F$ coincide with the Hermitian connection coefficients associated to the Bergman metric on $\mathfrak{R}_I$. Therefore $F$ enjoys the same system of equations of geodesic as that of the Bergman metric on $\mathfrak{R}_I$. Hence the geodesics (as a set) are the same as that of the Bergman metric on $\mathfrak{R}_I$, which implies that $F$ is complete.

(3) By a direct calculation, at the point $(0;V)$ with $V\neq \pmb{0}$, we have $\frac{\partial \widetilde{h}_\alpha}{\partial Z_{ij}}(0;V)=0$ and
\begin{eqnarray*}
\frac{\partial^2\widetilde{h}_\alpha}{\partial Z_{ij}\partial \overline{Z_{ab}}}(0;V)=\left\{\mbox{tr}[(VV^\ast)^\alpha]\right\}^{\frac{1}{\alpha}-1}\mbox{tr}\left\{\left(E_{ij}E_{ab}'VV^\ast+VE_{ab}'E_{ij}V^\ast\right)(VV^\ast)^{\alpha-1}\right\},
\end{eqnarray*}
from which it follows that
\begin{eqnarray*}
\sum_{i,a=1}^m\sum_{j,b=1}^n\frac{\partial^2 \widetilde{h}_\alpha}{\partial Z_{ij}\partial\overline{Z_{ab}}}V_{ij}\overline{V_{ab}}&=&2\left\{\mbox{tr}[(VV^\ast)^\alpha]\right\}^{\frac{1}{\alpha}-1}\cdot\mbox{tr}\left\{(VV^\ast)^{\alpha+1}\right\}> 0
\end{eqnarray*}
for any $V\in\widetilde{T_{0}^{1,0}\mathfrak{R}_I}$. Thus by Proposition 2.6 in \cite{GZ}, the holomorphic sectional curvature $K_{F}(0;V)$ of $F$ along $V\in\widetilde{T_{0}^{1,0}\mathfrak{R}_I}$ is given by
\begin{eqnarray}
K_{F}(0;V)=-\frac{2}{F^4(0;V)}\sum_{\alpha=1}^{k}\frac{\partial g}{\partial\xi_\alpha}\frac{\partial^2\widetilde{h}_\alpha}{\partial Z_{ij}\partial\overline{Z_{ab}}}V_{ij}\overline{V_{ab}}<0\label{hsc-a}
\end{eqnarray}
since $\frac{\partial g}{\partial \xi_\alpha}>0$ for $\alpha=1,\cdots,k$. Note that $K_{F}(0;\lambda V)=K_{F}(0;V)$ for any $\lambda\in\mathbb{C}^\ast$ and $V\in\widetilde{T_{0}^{1,0}\mathfrak{R}_I}$. This shows that $K_{F}(0;V)$ actually defines a smooth function on the projectivized fiber $PT_{0}^{1,0}\mathfrak{R}_I\cong \mathbb{CP}^{mn-1}$ which is compact. So that there exist two  negative constants $-K_1$ and $-K_2$ such that
\begin{equation}
-K_1\leq K_{F}(0;V)\leq -K_2<0,\quad\forall V\in \widetilde{T_{0}^{1,0}\mathfrak{R}_I}.\label{hsc-lu}
\end{equation}
Since $\mathfrak{R}_I$ is a homogeneous domain and $K_{F}(\varPhi(Z);\varPhi_\ast(V))=K_{F}(Z;V)$ for any $\varPhi\in\mbox{Aut}(\mathfrak{R}_I)$ and $(Z;V)\in\widetilde{T^{1,0}\mathfrak{R}_I}$, \eqref{hsc-lu} actually holds for any $(Z;V)\in \widetilde{T^{1,0}\mathfrak{R}_I}$. Namely we have \eqref{HSC-I}.

Similarly, for any $V,W\in \widetilde{T_{0}^{1,0}\mathfrak{R}_I}$, we have
$$
\sum_{i,a=1}^m\sum_{j,b=1}^n\frac{\partial^2 \widetilde{h}_\alpha}{\partial Z_{ij}\partial\overline{Z_{ab}}}W_{ij}\overline{W_{ab}}=2\left\{\mbox{tr}[(VV^\ast)^\alpha]\right\}^{\frac{1}{\alpha}-1}\cdot\mbox{tr}\left\{WW^\ast(VV^\ast)^{\alpha}\right\}\geq0.
$$
Thus by Proposition 2.7 in \cite{GZ}, the holomorphic bisectional curvature $B_F(0;V,W)$ of $F$ along $V,W\in \widetilde{T_{0}^{1,0}\mathfrak{R}_I}$ is given by
\begin{eqnarray}
B_{F}(0;V,W)=-\frac{2}{F^2(0;V)F_I^2(0;W)}\sum_{\alpha=1}^{k}\frac{\partial g}{\partial\xi_\alpha}\frac{\partial^2\widetilde{h}_\alpha}{\partial Z_{ij}\partial\overline{Z_{ab}}}W_{ij}\overline{W_{ab}}\leq 0.\label{bhsc-b}
\end{eqnarray}
By the same reason as $K_{F}(0;V)$, it follows from \eqref{bhsc-b} that there exists a negative constant $-C$ such that $-C\leq B_{F}(0;V,W)\leq 0$ holds for any $V,W\in \widetilde{T_0^{1,0}\mathfrak{R}_I}$, which implies \eqref{BHSC-I} since $\mathfrak{R}_I$ is homogeneous.

Since $\mathfrak{R}_{II}(m)$ and $\mathfrak{R}_{III}(m)$ are complex submanifolds of $\mathfrak{R}_I(m,n)$, the proof for $\mathfrak{R}_{II}$ and $\mathfrak{R}_{III}$ goes essentially the same lines as that of  $\mathfrak{R}_I$. Indeed, using \eqref{D-II} and \eqref{D-III}, we can obtain the formulas corresponding to \eqref{FIb} and \eqref{tha}, respectively for $\mathfrak{R}_{II}$ and $\mathfrak{R}_{III}$. These formulas essentially share the same form as that of $\mathfrak{R}_I$.
\end{proof}

\begin{remark}
We point it out that \eqref{tha} and \eqref{itha} should be understood in the corresponding cases: for $Z\in \mathfrak{R}_{II}$, one has  $Z'=Z$ and a tangent vector $V\in T_Z^{1,0}\mathfrak{R}_{II}$ satisfies $V=V'$, hence we have $Z^\ast=\overline{Z}$ and $V^\ast=\overline{V}$;  for $Z\in\mathfrak{R}_{III}$, one has  $Z'=-Z$ and a tangent vector $V\in T_Z^{1,0}\mathfrak{R}_{III}$ satisfies $V'=-V$, hence we have $Z^\ast=-\overline{Z}$ and $V^\ast=-\overline{V}$.
\end{remark}

\subsection{The case of $\mathfrak{R}_{IV}(N)$}

The automorphism of $\mathfrak{R}_{IV}$ which transforms a fixed point $z_0\in\mathfrak{R}_{IV}$ into the origin is given in the form  (cf. Hua \cite{Hua} and Lu \cite{Lu})
	\begin{eqnarray}
		w&=&\phi_{z_0}(z)=\left\{\left[\left(\frac{1+zz'}{2},\frac{1-zz'}{2i}\right)-zX_0'\right]A\begin{pmatrix}
			1 \\
			i \\
		\end{pmatrix}
		\right\}^{-1}\nonumber\\
&&\times\left[z-\left(\frac{1+zz'}{2},\frac{1-zz'}{2i}\right)X_0\right]D,
		\label{p-IV}
	\end{eqnarray}
	where
	\begin{eqnarray}
		X_0&=&2\begin{pmatrix}
			z_0z_0'+1 & i\left(z_0z_0'-1\right) \\
			\overline{z_0}z_0^\ast+1 & -i\left(\overline{z_0}z_0^\ast-1\right) \\
		\end{pmatrix}^{-1}
		\begin{pmatrix}
			z_0 \\
			\overline{z_0} \\
		\end{pmatrix}\nonumber\\
		&=&\frac{-1}{1-\mid z_0z_0'\mid^2}\begin{pmatrix}
			\left(\overline{z_0}z_0^\ast-1\right)z_0+\left(z_0z_0'-1\right)\overline{z_0}  \\
			i\left(z_0z_0'+1\right)\overline{z_0}-i\left(\overline{z_0}z_0^\ast+1\right)z_0 \\
		\end{pmatrix}
		\label{p-IV-a}
	\end{eqnarray}
	is a real $2$-by-$N$ matrix satisfying $I-X_0X_0'>0$,
	$A\in\mathscr{M}(2)$ and $D\in\mathscr{M}(N)$ are real matrices satisfying
	\begin{equation}
		AA'=(I-X_0X_0')^{-1},\quad DD'=(I-X_0'X_0)^{-1},\quad \det A>0.\label{p-IV-b}
	\end{equation}
	
	Conversely, every element of $\mbox{Aut}(\mathfrak{R}_{IV})$ can be expressed into the form \eqref{p-IV} for some $z_0\in \mathfrak{R}_{IV}$ and $A\in\mathscr{M}(2),D\in\mathscr{M}(N)$ satisfying \eqref{p-IV-b}.

	By \eqref{p-IV}-\eqref{p-IV-b}, the isotropy  subgroup $\mbox{Iso}(\mathfrak{R}_{IV})$ at the origin consists of complex-linear mappings of the following form:
	\begin{equation}
		w=\phi_0(z)=e^{i\theta}zD,\quad  z\in\mathfrak{R}_{IV},
	\end{equation}
	where $\theta\in\mathbb{R}$ and $D\in O(N;\mathbb{R})$ is the orthogonal group of order $N$ over the real number field $\mathbb{R}$. Thus $(\phi_0)_\ast (\xi)=e^{i\theta}\xi D$ for any $\xi\in T_0^{1,0}\mathfrak{R}_{IV}\cong\mathbb{C}^N$.
Thus a complex norm $\pmb{f}:T_0^{1,0}\mathfrak{R}_{IV}\rightarrow[0,+\infty)$ is $\mbox{Iso}(\mathfrak{R}_{IV})$-invariant if and only if
\begin{eqnarray}
	\pmb{f}(\xi)&=&\pmb{f}(e^{i\theta}\xi D)\label{gA}
\end{eqnarray}
for any $\theta\in \mathbb{R}$ and $D\in O(N;\mathbb{R})$.

\begin{theorem}\label{f4c}
Let $\pmb{f}: T_0^{1,0}\mathfrak{R}_{IV}\rightarrow [0,+\infty)$ be an $\mbox{Iso}(\mathfrak{R}_{IV})$-invariant complex norm  on $T_0^{1,0}\mathfrak{R}_{IV}$. Then

(1) there exists a continuous  function $\phi:[0,1]\rightarrow (0,+\infty)$ such that
\begin{eqnarray}
	\pmb{f}(\xi)=\sqrt{r\phi(s)},\quad r=\xi\xi^\ast, \quad s=\frac{\mid\xi\xi'\mid^2}{r^2},\quad \pmb{0}\neq \xi\in  T_{0}^{1,0}\mathfrak{R}_{IV}\cong\mathbb{C}^N;\label{g}
\end{eqnarray}

(2) if $\pmb{f}$ is smooth on $\widetilde{T_0^{1,0}\mathfrak{R}_{IV}}$, then $\phi$ is also smooth on $[0,1]$, and $\pmb{f}$ is a complex Minkowski norm  if and only if
		\begin{equation}
			\phi-2s\phi'>0\quad \mbox{and}\quad \phi[\phi+2(2-3s)\phi']+4s(1-s)[\phi \phi''-(\phi')^2]>0,\label{sn}
		\end{equation}
where $\phi'$ and $\phi''$ are the derivatives of $\phi$ with respect to $s$, respectively.
\end{theorem}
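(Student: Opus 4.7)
The plan for part (1) is classical invariant theory followed by homogeneity. I write $\xi = x + iy$ with $x, y \in \mathbb{R}^N$ and assemble the $2 \times N$ real matrix $M = \binom{x}{y}$, so that the $\mbox{Iso}(\mathfrak{R}_{IV})$-action $\xi \mapsto e^{i\theta}\xi D$ becomes $M \mapsto R_\theta M D$ with $R_\theta \in SO(2)$ and $D \in O(N,\mathbb{R})$. The first fundamental theorem for the orthogonal group says the $O(N,\mathbb{R})$-invariants of $M$ are generated by the entries of the $2 \times 2$ real symmetric matrix $MM'$; the residual $SO(2)$ then acts on $MM'$ by conjugation, whose invariants are generated by trace and determinant. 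Direct identification gives $\mathrm{tr}(MM') = \xi\xi^\ast = r$ and $4\det(MM') = r^2 - |\xi\xi'|^2$, so every continuous $\mbox{Iso}(\mathfrak{R}_{IV})$-invariant function on $T_0^{1,0}\mathfrak{R}_{IV}\setminus\{0\}$ is a continuous function of $(r, s)$ with $s = |\xi\xi'|^2/r^2 \in [0, 1]$. The 1-homogeneity $\pmb{f}^2(\lambda\xi) = |\lambda|^2 \pmb{f}^2(\xi)$, combined with $r \mapsto |\lambda|^2 r$ and $s$ invariant under $\lambda$-dilation, then forces $\pmb{f}^2(\xi) = r\phi(s)$ for some continuous $\phi:[0,1] \to [0,\infty)$, and positivity of $\pmb{f}$ off the origin gives $\phi > 0$.

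For the smoothness of $\phi$ in part (2), I would use the one-parameter family $\xi(v) = (1, iv, 0, \ldots, 0)$, $v \in \mathbb{R}$, which gives $r = 1 + v^2$ and $s = (1 - v^2)^2/(1 + v^2)^2$. Since $\pmb{f}^2(\xi(v))$ is smooth and even in $v$, the classical Whitney even-function lemma yields a smooth $H$ on $[0, \infty)$ with $\pmb{f}^2(\xi(v)) = H(v^2)$; the smooth decreasing diffeomorphism $u = (1 - \sigma)/(1 + \sigma)$ from $\sigma \in [0, \infty)$ onto $u \in (-1, 1]$ then identifies $H(\sigma)$ with $(2/(1 + u))\phi(u^2)$, so $\phi(u^2)$ is smooth on $(-1, 1]$. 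At $s_0 \in (0, 1)$ and at $s_0 = 1$ the map $u \mapsto u^2$ is a local diffeomorphism and transfers smoothness to $\phi$; at $s_0 = 0$, $\phi(u^2)$ is smooth and even in $u$, and a second application of the Whitney lemma provides a smooth $\Psi$ with $\phi(u^2) = \Psi(u^2)$, hence $\phi = \Psi$ is smooth at $0$. Thus $\phi \in C^\infty([0, 1])$.

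For the complex Minkowski condition, I would compute the Levi form from $\pmb{f}^2 = r\phi(s)$:
\[G_{i\bar j} = \phi\,\delta_{ij} + \phi'\bigl(r_i s_{\bar j} + s_i r_{\bar j} + r\,s_{i\bar j}\bigr) + r\phi''\,s_i s_{\bar j},\]
with $r_i = \bar\xi_i$ and $s_i = 2\xi_i\overline{\xi\xi'}/r^2 - 2s\bar\xi_i/r$. By $\mbox{Iso}(\mathfrak{R}_{IV})$-invariance it suffices to test positive-definiteness at the canonical representative $\xi_0 = (\sqrt r\cos\theta, i\sqrt r\sin\theta, 0, \ldots, 0)$, for which $s = \cos^2(2\theta)$. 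The residual stabilizer $O(N - 2, \mathbb{R})$ acting on the last $N - 2$ coordinates forces $(G_{i\bar j}(\xi_0))$ to be block-diagonal: the $(N - 2) \times (N - 2)$ block is the scalar $\phi\, I_{N-2}$ (positive since $\phi > 0$), while the remaining $2 \times 2$ Hermitian block in the $(\xi_1, \xi_2)$-plane has real eigenvalues. A direct computation, after substituting $s = \cos^2(2\theta)$ and carefully collecting the $\phi'$- and $\phi''$-contributions coming from $s_i$ and $s_{i\bar j}$, shows that one of the eigenvalues of this $2 \times 2$ block simplifies to $\phi - 2s\phi'$ and that its determinant simplifies to $\phi[\phi + 2(2-3s)\phi'] + 4s(1-s)[\phi\phi'' - (\phi')^2]$; positive-definiteness of the Levi form is therefore equivalent to the two inequalities in \eqref{sn}.

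The step I expect to be hardest is the algebraic simplification of the $2 \times 2$ Hermitian block into the clean closed form \eqref{sn}: after evaluating $s_i$ and $s_{i\bar j}$ at $\xi_0$ one obtains rational expressions in $a = \sqrt r\cos\theta$, $b = \sqrt r\sin\theta$, $T = r\cos(2\theta)$ and the derivatives of $\phi$, which must be consolidated via several non-obvious cancellations to produce a formula depending only on $(s, \phi, \phi', \phi'')$. In contrast, the invariant-theory reduction in part (1) is very clean, and the smoothness step is essentially a routine application of Whitney's even-function lemma once the distinguished family $\xi(v) = (1, iv, 0, \ldots, 0)$ is identified.
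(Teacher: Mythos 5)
Your part (1) is correct. The invariant-theory reduction ($M=\binom{x}{y}$, $O(N)$-invariants generated by $MM'$, residual $SO(2)$-conjugation leaving trace and determinant, identification $\mathrm{tr}(MM')=r$ and $4\det(MM')=r^2-|\xi\xi'|^2$, then homogeneity) is a more conceptual packaging of what the paper does by hand: the paper explicitly rotates $\xi$ to the normal form $(R+iS,iT,0,\dots,0)$ and then uses the $e^{i\theta}$-action to reduce to $|\xi\xi'|$. The two arguments are essentially the same orbit-classification, and yours is fine provided you note that the polynomial invariants of a compact group separate orbits (which is what licenses the passage from polynomial to continuous invariants). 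Note also that the paper does \emph{not} prove assertion (2) at all --- it cites Proposition 3.29 of [GZ] --- so your smoothness argument (the curve $\xi(v)=(1,iv,0,\dots,0)$ plus two applications of Whitney's even-function lemma) and your Levi-form computation go beyond what the paper supplies; the smoothness step looks sound.

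There is, however, a concrete error in your description of the Levi form at $\xi_0=(\sqrt r\cos\theta,\,i\sqrt r\sin\theta,0,\dots,0)$. The block structure is right, but the $(N-2)\times(N-2)$ block is \emph{not} $\phi\,I_{N-2}$: for $i,j\ge 3$ one has $s_i=0$ and $s_{i\bar j}=-\tfrac{2s}{r}\delta_{ij}$, so that block equals $(\phi-2s\phi')\,I_{N-2}$ --- this is in fact where the first inequality in \eqref{sn} comes from (check $s=1$, where the block is $(\phi-2\phi')I_{N-2}$). Correspondingly, $\phi-2s\phi'$ is \emph{not} an eigenvalue of the $2\times2$ block for general $s\in(0,1)$: a direct computation gives
\[
G_{1\bar1}+G_{2\bar2}=2\phi+(4-6s)\phi'+4s(1-s)\phi'',
\]
and the product of $\phi-2s\phi'$ with the complementary root of this trace differs from the determinant by $-4s(1-s)(\phi')^2-8s^2(1-s)\phi'\phi''$, which vanishes only at $s=0,1$. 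Your determinant claim is correct (the determinant of the $2\times2$ block does simplify to $\phi[\phi+2(2-3s)\phi']+4s(1-s)[\phi\phi''-(\phi')^2]$), but since neither eigenvalue is $\phi-2s\phi'$, positivity of the determinant together with $\phi-2s\phi'>0$ does not by itself give positive-definiteness of the $2\times2$ block; you still owe an argument that one diagonal entry (e.g.\ $G_{1\bar1}=\phi+(2-3s+\epsilon s^{3/2})\phi'+2s(1-s)(1-\epsilon\sqrt s)\phi''$) is positive under the hypotheses \eqref{sn}, and this is also the only source of the conditions when $N=2$, where the $(N-2)$-block is absent. As written, the "if" direction of the equivalence in (2) is therefore incomplete.
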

\begin{proof} In view of  Proposition 3.29 in \cite{GZ}, we only need to prove assertion (1).

For any non-zero vector $\xi=(\xi_1,\cdots,\xi_N)\in \mathbb{C}^N,$  we  denote  $\xi=\mbox{Re}\,\xi+i\mbox{Im}\,\xi$ with $\mbox{Re}\,\xi=(\mbox{Re}\,\xi_1,\cdots,\mbox{Re}\,\xi_N)$ and $\mbox{Im}\,\xi=(\mbox{Im}\,\xi_1,\cdots,\mbox{Im}\,\xi_N)\in\mathbb{R}^N$.

(i) If $\mbox{Re}\,\xi\neq \pmb{0}$, we can take $x:= \frac{\mbox{Re}\,\xi}{\sqrt{\langle\mbox{Re}\,\xi,\mbox{Re}\,\xi\rangle}}\in\mathbb{R}^N$ so that $\|x\|^2=1$, here $\|\cdot\|$ denotes the usual Euclidean norm.  Take another unit vector $y\in\mathbb{R}^N$ such that $y$ is orthogonal to $x$, namely $\|y\|^2=1$ and $\langle x,y\rangle=0$. Expressing $\xi$ in terms of  $x$ and $y$, we get
$\xi=(R+iS)x+iTy$,
where
\begin{eqnarray*}
R+iS&=&\frac{\xi\xi'+\xi\xi^\ast}{\sqrt{\xi\xi'+\overline{\xi}\xi^\ast+2\xi\xi^\ast}},\quad
T=\frac{\sqrt{(\xi\xi^\ast)^2-\mid\xi\xi'\mid^2}}{\sqrt{\xi\xi'+\overline{\xi}\xi^\ast+2\xi\xi^\ast}}.
\end{eqnarray*}

Now taking an orthogonal matrix $A\in O(N;\mathbb{R})$ (possibly depends on $\xi$) with $x'$ and $y'$ being the first and second columns of $A$, respectively. Here $x'$ and $y'$ denote the transpose of the row vector $x$ and $y$, respectively. Then we have
\begin{equation} \xi A=\big(R+iS,iT,\underbrace{0,\cdots,0}_{N-2}\big).
\label{v-rsting}
\end{equation}
Since $\pmb{f}$ satisfies \eqref{gA},  for any $\xi\in\mathbb{C}^N$ with $\xi\neq \pmb{0}$, we have
\begin{eqnarray}
\pmb{f}(\xi)
&=&\pmb{f}\left(\left(R+iS,iT,\underbrace{0,\cdots,0}_{N-2}\right)\right)\nonumber\\
&=&\sqrt{\xi\xi^\ast}\pmb{f}\left(\frac{\zeta+1}{\sqrt{2+\zeta+\bar{\zeta}}},i\frac{\sqrt{1-\mid\zeta\mid^2}}{\sqrt{2+\zeta+\bar{\zeta}}},\underbrace{0,\cdots,0}_{N-2}\right),\label{fx}
\end{eqnarray}
where we denote $\zeta:=\frac{\xi\xi'}{\xi\xi^\ast}$. Setting $\xi\xi':=\rho e^{i\theta}$ for $\rho=\mid\xi\xi'\mid$ and $\theta\in\mathbb{R}$. Replacing $\xi$ with $e^{-\frac{i}{2}\theta}\xi$ in \eqref{fx} and using \eqref{gA}, we obtain
\begin{equation}
\pmb{f}(\xi)=\frac{\sqrt{\xi\xi^\ast}}{\sqrt{2}}\pmb{f}\left(\sqrt{1+\mid\zeta\mid},i\sqrt{1-\mid\zeta\mid},\underbrace{0,\cdots,0}_{N-2}\right).
\end{equation}

Since $\pmb{f}$ is  continuous on $\mathbb{C}^N$ and smooth on $\mathbb{C}^N\setminus\{\pmb{0}\}$, if we denote $s:=\mid\zeta\mid^2=\frac{\mid\xi\xi'\mid^2}{r^2}$ with $r=\xi\xi^\ast$ for any $\xi\in \mathbb{C}^N$ satisfying $\xi\neq \pmb{0}$, then
we can rewrite $\pmb{f}$ as $\pmb{f}(\xi)=\sqrt{r\phi(s)}$ with
$$
\phi(s)=\frac{1}{2}\pmb{f}\left(\sqrt{1+\sqrt{s}},i\sqrt{1-\sqrt{s}},\underbrace{0,\cdots,0}_{N-2}\right).
$$

Note that we always have $0\leq s\leq 1$, and since $\mbox{Re}\,\xi\neq \pmb{0}$, $s=1$ if and only if $\mbox{Im}\,\xi=\pmb{0}$.

(ii) If $\mbox{Re}\,\xi=\pmb{0}$, we have $R=0$ and $x=\pmb{0}$. Since $\xi\neq \pmb{0}$,  we must have $\mbox{Im}\,\xi\neq \pmb{0}$. Thus we can take a unit real vector $y=\frac{\mbox{Im}\,\xi}{\sqrt{\langle \mbox{Im}\,\xi,\mbox{Im}\,\xi\rangle}}$ such that $\|y\|^2=1$ and $\xi=i\sqrt{\xi\xi^\ast}y\neq \pmb{0}$.
Now taking an orthogonal matrix $A\in O(N;\mathbb{R})$ (possibly depends on $\xi$) with $y'$ (the transpose of the row vector $y$) being the first column of $A$, then we have
\begin{equation}
\xi A=\left(i\sqrt{\xi\xi'},\underbrace{0,\cdots,0}_{N-1}\right),
\end{equation}
Hence
$$
\pmb{f}(\xi)=\pmb{f}\left(i\sqrt{\xi\xi^\ast},\underbrace{0,\cdots,0}_{N-1}\right)
=\sqrt{\xi\xi^\ast}\pmb{f}\left(i,\underbrace{0,\cdots,0}_{N-1}\right)=c\sqrt{\xi\xi^\ast},
$$
with $c:=\pmb{f}\left(i,\underbrace{0,\cdots,0}_{N-1}\right)>0$.
This completes the proof.
\end{proof}

\begin{remark}
 The complex norm $\pmb{f}: T_0^{1,0}\mathfrak{R}_{IV}\rightarrow [0,+\infty)$ of the form \eqref{g} was first introduced in \cite{GZ} which  was used to give an explicit construction of $\mbox{Aut}(\mathfrak{R}_{IV})$-invariant strongly pseudoconvex complex Finsler metrics on $\mathfrak{R}_{IV}$. The assertion (1) in Theorem \ref{f4c} actually shows that any $\mbox{Iso}(\mathfrak{R}_{IV})$-invariant complex norm is necessary of the form \eqref{g}.
\end{remark}

\begin{theorem}\label{thm-7}
Let $F: T^{1,0}\mathfrak{R}_{IV}\rightarrow [0,+\infty)$ be an $\mbox{Aut}(\mathfrak{R}_{IV})$-invariant strongly pseudoconvex complex Finsler metric on $\mathfrak{R}_{IV}$.
Then we have the following assertions:

(1) There exists a smooth function $\phi:[0,1]\rightarrow[0,+\infty)$
 such that
\begin{equation}
F(z;v)=\sqrt{\widetilde{r}(z;v)\phi(\widetilde{s}(z;v))},\quad \forall (z;v)\in \widetilde{T^{1,0}\mathfrak{R}_{IV}}
\end{equation}
 with
\begin{equation}
\widetilde{r}(z;v):=\frac{2N}{\Delta^2(z)}v\left[\triangle(z)I_N-2(\overline{z}z^\ast)z'z-2(1-2zz^\ast)z'\overline{z}
			+2z^\ast z-2(zz')z^\ast\overline{z}
			\right]v^\ast\label{rzv}
\end{equation}
and
\begin{equation}
 \widetilde{s}(z;v):=\frac{4N^2}{\triangle^2(z)}\frac{\mid vv'\mid^2}{\widetilde{r}^2(z;v)},\quad \triangle (z):=1+\mid zz'\mid-2zz^\ast.\label{szv}
\end{equation}
In particular, if $F$ is Hermitian quadratic, then (up to multiplicative constants) it is the Bergman metric on $\mathfrak{R}_{IV}$.

(2) $F$ is a K\"ahler-Finsler metric. More precisely, $F$ is a  complete K\"ahler-Berwald metric on $\mathfrak{R}_{IV}$.

(3) The holomorphic sectional curvature $K_F$ of $F$ is bounded below and above by negative constants, respectively. That is, there exists constants $-K_1$ and $-K_2$ such that
$$-K_1\leq K_F(z;v)\leq -K_2<0,\quad \forall (z;v)\in \widetilde{T^{1,0}\mathfrak{R}_{IV}}.$$
If $\phi'\geq 0$, then the holomorphic bisectional curvature $B_F$ of $F$ is non-positive and bounded below. That is, there exists a constant $-C$ such that
$$
-C\leq B_F(z;v,w)\leq 0,\quad \forall (z;v),(z;w)\in\widetilde{T^{1,0}\mathfrak{R}_{IV}}.
$$
\end{theorem}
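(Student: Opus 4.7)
The plan is to follow the same scheme used for the proof of Theorem \ref{thm-2} on $\mathfrak{R}_I,\mathfrak{R}_{II},\mathfrak{R}_{III}$, now replacing Theorem \ref{thm-1} with Theorem \ref{f4c} and the automorphism data \eqref{p-I}--\eqref{p-III} with the more intricate formulas \eqref{p-IV}--\eqref{p-IV-b} for $\mathrm{Aut}(\mathfrak{R}_{IV})$.

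\textbf{Assertion (1).} Since $\mathfrak{R}_{IV}$ is homogeneous, for any $z_0\in\mathfrak{R}_{IV}$ I choose $\varPhi_{z_0}\in\mathrm{Aut}(\mathfrak{R}_{IV})$ sending $z_0$ to $0$; $\mathrm{Aut}(\mathfrak{R}_{IV})$-invariance of $F$ forces $F(z_0;v)=\pmb{f}((\varPhi_{z_0})_\ast v)$, where $\pmb{f}(\xi):=F(0;\xi)$ is $\mathrm{Iso}(\mathfrak{R}_{IV})$-invariant. By Theorem \ref{f4c} there exists a smooth $\phi:[0,1]\to(0,+\infty)$ with $\pmb{f}(\xi)=\sqrt{(\xi\xi^\ast)\,\phi(|\xi\xi'|^2/(\xi\xi^\ast)^2)}$. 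I would then differentiate \eqref{p-IV} at $z_0$ and use \eqref{p-IV-b} together with the explicit form \eqref{p-IV-a} of $X_0$ to compute the two basic invariants $((\varPhi_{z_0})_\ast v)\cdot((\varPhi_{z_0})_\ast v)^\ast$ and $|((\varPhi_{z_0})_\ast v)\cdot((\varPhi_{z_0})_\ast v)'|^2$ in closed form. After simplification, using that factors of $\det(I-X_0'X_0)$ and $\det(I-X_0X_0')$ reduce to powers of $\triangle(z_0)=1+|z_0z_0'|-2z_0z_0^\ast$, these reduce precisely to $\widetilde{r}(z_0;v)$ and $\widetilde{s}(z_0;v)$ of \eqref{rzv}--\eqref{szv}. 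Relabelling $z_0$ as $z$ gives the stated characterization. For the Hermitian quadratic case, note that $\widetilde{s}$ is $0$-homogeneous in $v$ while $\widetilde{r}$ is already Hermitian quadratic in $v$, so $F^2=\widetilde{r}\,\phi(\widetilde{s})$ can be Hermitian quadratic in $v$ only when $\phi$ is constant; the resulting $F^2$ then coincides (up to scalar) with the Bergman metric.

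\textbf{Assertion (2).} By Theorem 2.4 in \cite{GZ} it suffices to verify the mixed-derivative identity $\partial^2 F^2/(\partial z_i\,\partial\overline{v_j})(0;v)=0$ for every $v\neq\pmb{0}$. Writing $F^2=\widetilde{r}\,\phi(\widetilde{s})$, applying the chain rule and reading off from \eqref{rzv}--\eqref{szv} that each term linear in $z$ or $\overline{z}$ is either purely holomorphic or purely antiholomorphic in $v$, one finds directly that $\partial \widetilde{r}/\partial z_i(0;v)=0$ and $\partial \widetilde{s}/\partial z_i(0;v)=0$, and that the remaining pieces contribute nothing when paired with $\partial/\partial\overline{v_j}$. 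Hence $F$ is a K\"ahler-Berwald metric. Moreover, the horizontal Chern-Finsler connection coefficients of $F$ coincide with the Hermitian connection coefficients of the Bergman metric on $\mathfrak{R}_{IV}$, so the two metrics share the same geodesic equations; completeness of $F$ therefore follows from completeness of the Bergman metric.

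\textbf{Assertion (3).} I would apply Proposition 2.6 in \cite{GZ} at the origin. Since $\partial\widetilde{r}/\partial z_i(0;v)=0=\partial\widetilde{s}/\partial z_i(0;v)$ when $v\neq\pmb{0}$, the curvature $K_F(0;v)$ reduces to a $\phi$- and $\phi'$-weighted combination of $(\partial^2\widetilde{r}/\partial z_i\partial\overline{z_j})(0;v)\,v_i\overline{v_j}$ and $(\partial^2\widetilde{s}/\partial z_i\partial\overline{z_j})(0;v)\,v_i\overline{v_j}$, both of which are computed directly from \eqref{rzv}--\eqref{szv} and shown to be non-negative with strict positivity giving $K_F(0;v)<0$ for all $v\neq\pmb{0}$. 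Since $K_F(0;\lambda v)=K_F(0;v)$ for $\lambda\in\mathbb{C}^\ast$, this descends to a smooth negative function on the compact projective fibre $PT_0^{1,0}\mathfrak{R}_{IV}\cong\mathbb{CP}^{N-1}$ and is therefore bounded between two negative constants; $\mathrm{Aut}(\mathfrak{R}_{IV})$-invariance of curvature propagates the bounds to all of $\widetilde{T^{1,0}\mathfrak{R}_{IV}}$. Applying Proposition 2.7 in \cite{GZ} and the same compactness argument to $B_F$ yields a lower bound, and the hypothesis $\phi'\geq 0$ is exactly the sign condition ensuring the relevant quadratic form in $w$ is non-positive, so that $B_F(z;v,w)\leq 0$.

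\textbf{Main obstacle.} The most delicate step is the closed-form computation in assertion (1): transporting the two $\mathrm{Iso}(\mathfrak{R}_{IV})$-invariants $\xi\xi^\ast$ and $|\xi\xi'|^2$ through the implicit, mixed real/complex automorphism \eqref{p-IV} and extracting the symmetric expressions \eqref{rzv} and \eqref{szv}. This relies on the twin identities $AA'=(I-X_0X_0')^{-1}$ and $DD'=(I-X_0'X_0)^{-1}$ and on expanding the quadratic form $z\mapsto X_0$ in \eqref{p-IV-a}; it has no direct analogue in the matrix calculus available for types $I,II,III$ and is the place where the special geometry of $\mathfrak{R}_{IV}$ most visibly enters.
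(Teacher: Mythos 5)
Your treatment of assertion (1) coincides with the paper's: reduce to the origin by homogeneity, invoke Theorem \ref{f4c}(1) for the normal form $\pmb{f}^2=r\phi(s)$, and push the two invariants $\xi\xi^\ast$ and $\mid\xi\xi'\mid^2$ through the differential of the automorphism \eqref{p-IV} to obtain \eqref{rzv}--\eqref{szv}; the paper delegates exactly this computation to Example 3.32 in \cite{GZ} (up to the harmless normalization $\sqrt{2N}$ it inserts in front of $(\psi_{z_0})_\ast(v)$). Your verification of (2) likewise reproduces for type $IV$ the argument the paper gives for types $I,II,III$ and here outsources to Theorem 3.35 in \cite{GZ}; the key vanishing $\partial\widetilde r/\partial z_i(0;v)=\partial\widetilde s/\partial z_i(0;v)=0$ does hold, because every $z$-dependent term in \eqref{rzv}--\eqref{szv} is at least quadratic in $(z,\bar z)$.

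The gap is in assertion (3). Writing $g(\xi_1,\xi_2)=\xi_1\phi(\xi_2)$ so that $F^2=g(\widetilde r,\widetilde s)$, the curvature formula of Proposition 2.6 in \cite{GZ} expresses $K_F(0;v)$ as $-\tfrac{2}{F^4}$ times $\phi(\widetilde s)\,(\partial^2_{z\bar z}\widetilde r)v\bar v+\widetilde r\,\phi'(\widetilde s)\,(\partial^2_{z\bar z}\widetilde s)v\bar v$. In the type $I,II,III$ proof the analogous sum is manifestly positive because the standing assumption \eqref{scc} forces every weight $\partial g/\partial\xi_\alpha$ to be positive. For type $IV$ the standing assumption is \eqref{sn}, which does \emph{not} force $\phi'\geq 0$; indeed the theorem imposes $\phi'\geq 0$ only as an additional hypothesis for the bisectional-curvature statement, so for the sectional-curvature bound one must allow $\phi'<0$. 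Your claim that both Hessian contractions are non-negative and that this already yields $K_F(0;v)<0$ therefore does not close the argument: when $\phi'<0$ the second term has the wrong sign and could in principle cancel the first (and the sign of $(\partial^2_{z\bar z}\widetilde s)(0;v)v\bar v$ is itself not evident from \eqref{szv}). What is needed is an estimate of the combined expression using the full strong pseudoconvexity conditions \eqref{sn}; this is precisely the content of Theorem 3.36 in \cite{GZ}, which the paper cites for both curvature assertions. The compactness-of-$PT_0^{1,0}\mathfrak{R}_{IV}$ and homogeneity argument that upgrades pointwise negativity to two-sided bounds is fine once that sign analysis is in place.
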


\begin{proof}
Let $z_0\in\mathfrak{R}_{IV}$ be an arbitrary fixed point and $v\in T_0^{1,0}\mathfrak{R}_{IV}$. Let $\psi_{z_0}\in\mbox{Aut}(\mathfrak{R}_{IV})$ be given by \eqref{p-IV} such that $\psi_{z_0}(z_0)=0$ and $(\psi_{z_0})_\ast(v)\in T_0^{1,0}\mathfrak{R}_{IV}$.
By the assertion (1) in Theorem  \ref{f4c}, there exists a smooth function $\phi:[0,1]\rightarrow (0,+\infty)$ such that
 $\pmb{f}^2(v)=r\phi(s)$ with $s:=\frac{\mid vv'\mid^2}{r^2}$ and $r:=vv^\ast$
for any $v\in\mathbb{C}^N\setminus\{\pmb{0}\}$. Define $F(z_0;v):=\pmb{f}(\sqrt{2N}(\psi_{z_0})_\ast(v))$, it is easy to check (cf. Example 3.32 in \cite{GZ}) that
\begin{equation}
F^2(z_0;v)=\sqrt{\widetilde{r}(z_0;v)\phi(\widetilde{s}(z_0;v)},\label{FZ0-IV}
\end{equation}
 where
\begin{eqnarray*}
\widetilde{r}(z_0;v)&=&\frac{2N}{\Delta^2(z_0)}v\left[\triangle(z_0)I_N-2(\overline{z_0}z_0^\ast)z_0'z_0\right.\\
&&\left.-2(1-2z_0z_0^\ast)z_0'\overline{z_0}
			+2z_0^\ast z_0-2(z_0z_0')z_0^\ast\overline{z_0}
			\right]\overline{v'}
\end{eqnarray*}
and
$$\widetilde{s}(z;v)=\frac{4N^2}{\triangle^2(z)}\frac{\mid vv'\mid^2}{\widetilde{r}^2(z;v)},\quad\quad \triangle(z_0):=1+\mid z_0z_0'\mid^2-2z_0z_0^\ast.$$
Since \eqref{FZ0-IV} holds for any fixed point $z_0\in\mathfrak{R}_{IV}$ and any $v\in \widetilde{T_{z_0}^{1,0}\mathfrak{R}_{IV}}$, changing $z_0$ to $z$ if necessary
we obtain \eqref{rzv} and \eqref{szv}.

 The assertions (2) and (3) follows immediately from  Theorem 3.35 and Theorem 3.36 in \cite{GZ}.
\end{proof}

\section{Schwarz lemma on the classical domains}\label{section-4}

In this section,  we shall prove Theorem \ref{m-th-2}. Our proof is purely geometric. First we need establish a comparison theorem of holomorphically invariant K\"ahler-Berwald metrics with the Carath\'edory metric on convex domains. Indeed, by Lempert's fundamental theorem of invariant metrics on convex domains the Carath\'eodory metric and the Kobayashi metric coincide (cf. \cite{Lempert}, see also \cite{AP}). In particular, this will be applied to the classical domains for our purpose.

\subsection{Holomorphic curvature of complex Finsler metric}

In complex Finsler geometry, the notion of holomorphic curvature is defined both for  upper semi-continuous complex Finsler metrics  and strongly pseudoconvex complex Finsler metrics.

For an upper semi-continuous complex Finsler metric $F:T^{1,0}M\rightarrow[0,+\infty)$, the holomorphic curvature $K_F(p;v)$ at $p\in M$ along a non-zero vector $v\in T_p^{1,0}M$ is given by (cf. Definition 3.1.8 in \cite{AP})
$$K_F(p;v):=\sup_\varphi\left\{K(\varphi^\ast G)(0)\right\},$$
where the supremum is taken with respect to the family of all holomorphic mappings $\varphi$ from the open unit disk $\mathbb{D}\subset\mathbb{C}$ into $M$ with $\varphi(0)=p$ and $\varphi'(0)=\lambda v$ for some $\lambda\in\mathbb{C}^\ast$, and $K(\varphi^\ast G)$ is the Gaussian curvature (with respect which the Gaussian curvature of the Poincar\'e metric $ds_{\mathbb{D}}^2=\frac{dz\otimes d\bar{z}}{(1-\mid z\mid^2)^2}$ is $-4$) of the pseudohermitian metric $\varphi^\ast G$ on $\mathbb{D}$ (cf. Definition 2.5.3 in \cite{AP}). For a strongly pseudoconvex complex Finsler metric $F:T^{1,0}M\rightarrow[0,+\infty)$, the holomorphic sectional curvature $K_F$ can also be defined via the curvature of the Chern-Finsler connection associated to $F$ (cf. Definition 2.5.2 in \cite{AP}). By Corollary 2.5.4 in \cite{AP}, if $F$ is smooth these two definitions coincide.

Let $\mbox{Aut}(M)$ be the holomorphic automorphism group of a complex manifold $M$. For any $p\in M$ and a non-zero vector $v\in T_p^{1,0}M$, one has
$$
K_F(p;v)=K_F(\phi(p);\phi_\ast(v)),\quad \forall \phi\in\mbox{Aut}(M)
$$
and
$$
K_F(p;\lambda v)=K_F(p;v),\quad \forall\lambda\in\mathbb{C}^\ast.
$$
Thus $K_F$ is $\mbox{Aut}(M)$-invariant and it is actually a smooth function defined on the projective bundle $PT^{1,0}M$ over $M$.
Furthermore if $M$ is a homogeneous complex manifold and $F$ is an $\mbox{Aut}(M)$-invariant strongly pseudoconvex complex Finsler metric on $M$, then there are points $(z_1;[v_1])$ and $(z_2,[v_2])\in PT^{1,0}M$ such that the minimum  and maximum of $K_F$ can be achieved at $(z_1;[v_1])$ and $(z_2;[v_2])$, respectively.

In the following Proposition \ref{pmp}, we  establish a relationship between the holomorphic curvature of a continuous complex Finsler metric and the holomorphic curvature of the Carath\'eodory metric and the Kobayashi metric on convex domains. It is a modification of Proposition 1.1 in \cite{Wan}, where the $\partial\bar{\partial}$-operator on the double tangent bundle is used. Our modification uses the generalized Laplacian $\triangle$ (cf. Definition 3.1.5 in \cite{AP}), which is more flexible since it only involves the tangent bundle of the manifold.

\begin{proposition}\label{pmp}
Let $M$ be a complex manifold of complex dimension $n$. Suppose that $F_1$ is an upper semi-continuous complex Finsler metric and $F_2$ is a continuous complex Finsler metric on $M$, respectively. Then for any $(z;[v])\in PT^{1,0}M$, one has
\begin{equation}
\frac{1}{2F_2^2(z;v)}\sup_\varphi\left\{\triangle\left(\log \frac{\varphi^\ast F_1^2}{\varphi^\ast F_2^2}\right)(0)\right\}\geq K_{F_2}(z;[v])-\frac{F_1^2(z;v)}{F_2^2(z;v)}K_{F_1}(z;[v]),\label{cin}
\end{equation}
where the supremum is taken with respect to the family of all holomorphic mappings $\varphi:\mathbb{D}\rightarrow M$ with $\varphi(0)=z,\varphi'(0)=\lambda v$ for some $\lambda\in\mathbb{C}^\ast$.
\end{proposition}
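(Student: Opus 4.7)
Plan: The proof rests on the standard identity relating the Gaussian curvature of a pseudo-Hermitian metric on the disk to the Laplacian of its logarithm. For a smooth pseudo-Hermitian metric $u|d\zeta|^2$ on $\mathbb{D}$, with the normalization $K(ds_{\mathbb{D}}^2)\equiv -4$, one has $\triangle\log u=-2u\,K(u|d\zeta|^2)$, where $\triangle$ is the (Euclidean) generalized Laplacian. For an upper semi-continuous metric this becomes an inequality read via the generalized Laplacian $\triangle$ of Definition 3.1.5 in \cite{AP}, which is precisely the flexibility needed to accommodate the u.s.c. $F_1$ while keeping the computation on the base manifold (as opposed to Wan's $\partial\bar\partial$ on the double tangent bundle).

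First I would, for a fixed holomorphic $\varphi:\mathbb{D}\to M$ with $\varphi(0)=z$ and $\varphi'(0)=\lambda v$, set $u_i(\zeta):=\varphi^\ast F_i^2(\zeta)=F_i^2(\varphi(\zeta);\varphi'(\zeta))$, so that $u_i(0)=|\lambda|^2 F_i^2(z;v)$. Applying the curvature identity to $u_1$ and $u_2$ separately and subtracting gives
\begin{align*}
\triangle\log\frac{u_1}{u_2}(0)
&=2u_2(0)K(\varphi^\ast F_2^2)(0)-2u_1(0)K(\varphi^\ast F_1^2)(0)\\
&=2|\lambda|^2 F_2^2(z;v)\Bigl[K(\varphi^\ast F_2^2)(0)-\frac{F_1^2(z;v)}{F_2^2(z;v)}K(\varphi^\ast F_1^2)(0)\Bigr].
\end{align*}
Each $K(\varphi^\ast F_i^2)(0)$ is invariant under the reparametrization $\varphi\mapsto\varphi(c\,\cdot)$ of the disk, so one may normalize $|\lambda|=1$; equivalently, the supremum on the LHS may be restricted to the subfamily with $|\lambda|=1$, which can only shrink it. Under this normalization, dividing by $2F_2^2(z;v)$ produces, for each such $\varphi$,
\[
\frac{1}{2F_2^2(z;v)}\,\triangle\log\frac{\varphi^\ast F_1^2}{\varphi^\ast F_2^2}(0)=K(\varphi^\ast F_2^2)(0)-\frac{F_1^2(z;v)}{F_2^2(z;v)}K(\varphi^\ast F_1^2)(0).
\]

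Next I would pass to the supremum. By the very definition of $K_{F_1}$ as a supremum, $K(\varphi^\ast F_1^2)(0)\leq K_{F_1}(z;[v])$ for every competitor $\varphi$; since $F_1^2/F_2^2\geq 0$, this yields
\[
K(\varphi^\ast F_2^2)(0)-\frac{F_1^2(z;v)}{F_2^2(z;v)}K(\varphi^\ast F_1^2)(0)\;\geq\;K(\varphi^\ast F_2^2)(0)-\frac{F_1^2(z;v)}{F_2^2(z;v)}K_{F_1}(z;[v]).
\]
Taking the supremum over $\varphi$ on both sides and invoking $\sup_\varphi K(\varphi^\ast F_2^2)(0)=K_{F_2}(z;[v])$, via the elementary inequality $\sup(A-cB)\geq \sup A-c\sup B$ for $c\geq 0$, yields the claimed bound.

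The principal obstacle is the pointwise step: since $F_1$ is only upper semi-continuous, $u_1=\varphi^\ast F_1^2$ need not be smooth, so the identity $\triangle\log u_1(0)=-2u_1(0)K(u_1)(0)$ must be read in the generalized sense. The sup-definition of $K_{F_1}$, together with Corollary 2.5.4 of \cite{AP} (agreement of the two notions of holomorphic curvature in the smooth case) and the lower semi-continuity built into the generalized $\triangle$ on u.s.c. functions, ensure that $\triangle\log u_1(0)\geq -2u_1(0)K_{F_1}(z;[v])$ survives and that the subtraction step above preserves the correct direction of the inequality. A secondary subtlety is the normalization $|\lambda|=1$, which is justified by the rescaling invariance of the Gaussian curvature noted above.
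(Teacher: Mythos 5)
Your proof is correct and follows essentially the same route as the paper: both arguments rest on the pointwise identity $\triangle\log(\varphi^\ast F_1^2/\varphi^\ast F_2^2)(0)=2u_2(0)K(\varphi^\ast F_2^2)(0)-2u_1(0)K(\varphi^\ast F_1^2)(0)$ for each competitor $\varphi$, followed by the superadditivity of the supremum (the paper writes it as $\sup A-\sup(cB)\leq\sup(A-cB)$, you as a per-$\varphi$ bound before taking the sup). Your explicit remark on the $|\lambda|=1$ normalization is a welcome clarification of a point the paper leaves implicit, but the substance of the argument is the same.
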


\begin{proof}Indeed, by the definition of Gaussian curvature (cf. Definition 2.5.3 in \cite{AP}) and the definition of generalized Laplacian $\triangle$ (cf. Definition 3.1.5 in \cite{AP}),  we have
\begin{eqnarray*}
\left(K_{F_2}-\frac{F_1^2}{F_2^2}K_{F_1}\right)(z;[v])
&=&\sup_\varphi\left\{K(\varphi^\ast F_2)(0)\right\}-\frac{F_1^2(z;v)}{F_2^2(z;v)}\sup_\varphi\left\{K(\varphi^\ast F_1)(0)\right\}\\
&=&\sup_\varphi\left\{K(\varphi^\ast F_2)(0)\right\}-\sup_\varphi\left\{\frac{F_1^2(z;v)}{F_2^2(z;v)}K(\varphi^\ast F_1)(0)\right\}\\
&\leq&\sup_\varphi\left\{K(\varphi^\ast F_2)(0)-\frac{F_1^2(z;v)}{F_2^2(z;v)}K(\varphi^\ast F_1)(0)\right\}\\
&=&\sup_\varphi\left\{\frac{1}{2\varphi^\ast F_2^2}\triangle \left(\log\frac{\varphi^\ast F_1^2}{\varphi^\ast F_2^2}\right)(0)\right\},
\end{eqnarray*}
where the supermum is taken with respect to the family of all holomorphic mappings $\varphi:\mathbb{D}\rightarrow M$ with $\varphi(0)=z$ and $\varphi'(0)=\lambda v$ for some $\lambda\in\mathbb{C}^\ast$,

\end{proof}

\subsection{A comparison theorem of invariant metrics on convex domains}\label{section-4.2}

It is known that if a complex manifold  admits both the Bergman metric and the Carath\'eodory metric, then these two invariant metrics are comparable.
In general, however, the Bergman metric and the Kobayashi metric are non-comparable.

On the other hand, bounds on the holomorphic sectional curvature allow to compare a complex Finsler metric with the Kobayashi metric and to give conditions for a given complex Finsler metric to be the Kobayashi metric \cite{AP}.

The following theorem \ref{thm-fcc} shows that if an $\mbox{Aut}(\mathfrak{D})$-invariant strongly pseudoconvex complex Finsler metric $F$ on a convex domain $\mathfrak{D}$ is comparable with the Carath\'eodory metric $F_C$ (resp. the Kobayashi metric $F_K$) on $\mathfrak{D}$, then the comparable constants are actually determined by the holomorphic curvatures of $F$ and  $F_C$, respectively. Hence the comparable constants are both analytic and geometric invariants.
\begin{theorem}\label{thm-fcc}
Let $\mathfrak{D}$ be a convex domain in $\mathbb{C}^n$ and $F_C$ be the Carath\'eodory metric on $\mathfrak{D}$.  Suppose that $F:T^{1,0}\mathfrak{D}\rightarrow [0,+\infty)$ is an $\mbox{Aut}(\mathfrak{D})$-invariant strongly pseudoconvex complex Finsler metric  such that
\begin{equation}
c_1F_C^2(z;v)\leq F^2(z;v)\leq c_2F_C^2(z;v),\quad  \forall(z;v)\in T^{1,0}\mathfrak{D}\label{CF}
\end{equation}
for constants $c_1>0,c_2>0$, and the holomorphic sectional curvature $K_F$ of $F$ satisfies
$$
\inf_{(z;[v])\in PT^{1,0}\mathfrak{D}} K_F(z;[v])=-K_1<0\quad \mbox{and}\quad \sup_{(z;[v])\in PT^{1,0}\mathfrak{D}}K_F(z;[v])=-K_2<0
$$
for positive constants $K_1$ and $K_2$, respectively.
 Then it necessary that $c_1\geq \frac{4}{K_1}$ and $c_2\leq \frac{4}{K_2}$, hence
\begin{equation}
\frac{4}{K_1}F_C^2(z;v)\leq F^2(z;v)\leq \frac{4}{K_2}F_C^2(z;v),\quad\forall(z;v)\in T^{1,0}\mathfrak{D}.\label{fc}
\end{equation}
 In particular, if $K_F\equiv -c$ for some positive constant $c>0$, then $F$ coincides with the Carath\'edory metric (hence the Kobayashi metric) on $\mathfrak{D}$.
\end{theorem}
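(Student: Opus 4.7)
The plan is to apply Proposition~\ref{pmp} twice—once with $(F_1,F_2)=(F,F_C)$ and once with the roles swapped—and to combine each application with a maximum-principle argument on the projective bundle $PT^{1,0}\mathfrak{D}$. The key geometric input will be Lempert's theorem: on a convex domain the Carath\'eodory and Kobayashi metrics coincide, and through every $(p;v)\in\widetilde{T^{1,0}\mathfrak{D}}$ there is a complex-geodesic disk $\varphi\colon\mathbb{D}\to\mathfrak{D}$ along which $\varphi^\ast F_C^2$ is a constant multiple of the Poincar\'e metric. This gives $K(\varphi^\ast F_C^2)(0)=-4$ and, together with the contraction $\varphi^\ast F_C\leq ds_{\mathbb{D}}$ valid for every such $\varphi$, forces $K_{F_C}\equiv -4$ on $PT^{1,0}\mathfrak{D}$; this identity is what makes the comparison constants in the conclusion sharp.

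For the upper bound $c_2\leq 4/K_2$, I would introduce the $\mbox{Aut}(\mathfrak{D})$-invariant, $v$-zero-homogeneous function $\rho(z;[v]):=F^2(z;v)/F_C^2(z;v)$ on $PT^{1,0}\mathfrak{D}$. By \eqref{CF}, $\rho$ is bounded; in the setting of a homogeneous convex domain (in particular any classical domain), $\rho$ descends to a continuous function on the compact quotient $PT_0^{1,0}\mathfrak{D}/\mbox{Iso}(\mathfrak{D})$ and therefore attains its supremum $c_2$ at some $(p_0;[v_0])$. For every holomorphic $\varphi\colon\mathbb{D}\to\mathfrak{D}$ with $\varphi(0)=p_0$ and $\varphi'(0)\in\mathbb{C}^\ast v_0$, the function $z\mapsto\log(\varphi^\ast F^2/\varphi^\ast F_C^2)(z)=\log\rho(\varphi(z);[\varphi'(z)])$ attains its maximum at $z=0$, so $\triangle\log(\varphi^\ast F^2/\varphi^\ast F_C^2)(0)\leq 0$, and this is preserved when passing to the supremum over $\varphi$. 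Inserting into Proposition~\ref{pmp} yields
\[
0\ \geq\ K_{F_C}(p_0;[v_0])-\rho(p_0;[v_0])\,K_F(p_0;[v_0])\ =\ -4-c_2\,K_F(p_0;[v_0]),
\]
and since $K_F(p_0;[v_0])\leq -K_2<0$, dividing through produces $c_2\leq -4/K_F(p_0;[v_0])\leq 4/K_2$.

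The lower bound will be obtained by the symmetric maneuver applied to $\sigma:=F_C^2/F^2=1/\rho$: after locating its maximum $1/c_1$ and applying Proposition~\ref{pmp} with $(F_1,F_2)=(F_C,F)$, the same Laplacian-at-a-maximum step leads to $K_F(p_0;[v_0])\leq -4/c_1$; comparison with $K_F\geq -K_1$ then yields $c_1\geq 4/K_1$. The two inequalities together give \eqref{fc}. The ``in particular'' assertion is then immediate: if $K_F\equiv -c$, one has $K_1=K_2=c$, both bounds collapse to $F^2=(4/c)F_C^2$, and after absorbing the multiplicative constant $F$ coincides with the Carath\'eodory (equivalently, by Lempert, the Kobayashi) metric.

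The hard part will be ensuring that $\rho$ (resp.\ $\sigma$) actually attains its extremum on $PT^{1,0}\mathfrak{D}$, since this is what powers the Laplacian-at-a-maximum step. For classical domains this is automatic from homogeneity, as described. For an arbitrary convex $\mathfrak{D}$ with small automorphism group one would have to replace the strict maximizer by a supremizing sequence and run the argument with a standard Yau-type barrier perturbation (e.g.\ subtracting $\varepsilon\log F_C$ to force an interior maximum and then letting $\varepsilon\to 0$), which I expect to be technical but routine. A secondary verification is that $F_C$ is continuous on $\widetilde{T^{1,0}\mathfrak{D}}$ so that Proposition~\ref{pmp} applies in both directions; this is standard on taut convex domains.
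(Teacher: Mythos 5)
Your proposal is correct in substance, and its lower\nobreakdash-bound half ($c_1\geq 4/K_1$ via the maximum of $F_C^2/F^2$, the vanishing of the generalized Laplacian's sign at that maximum, Proposition~\ref{pmp} with $(F_1,F_2)=(F_C,F)$, and $K_{F_C}\equiv-4$ on convex domains) is exactly the paper's argument. Where you genuinely diverge is the upper bound: you run the maximum principle a second time on $\rho=F^2/F_C^2$ at its maximizer, which does yield $0\geq -4-c_2K_F(p_0;[v_0])$ and hence $c_2\leq 4/K_2$ correctly; the paper instead observes that $K_F\leq -K_2$ implies $K_{\frac{\sqrt{K_2}}{2}F}\leq -4$ and invokes the Ahlfors--Schwarz-type domination by the Kobayashi metric (Proposition 3.1.14(ii) in \cite{AP}, or Theorem 5.1 in \cite{A2}) to get $\frac{\sqrt{K_2}}{2}F\leq F_K=F_C$ directly. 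The trade-off is real: the paper's route for the upper half needs neither the a priori comparison \eqref{CF} nor the attainment of any extremum, so it is valid on an arbitrary convex domain with no further work, whereas your symmetric argument is more self-contained (only Proposition~\ref{pmp} is used) but hinges on $\rho$ actually attaining its supremum. That attainment is automatic for the classical domains by homogeneity plus compactness of $PT_0^{1,0}\mathfrak{D}$ (which is all the paper ever uses, and the paper silently makes the analogous assumption for $F_C^2/F^2$ in its own lower-bound step), but your proposed Yau-type barrier $-\varepsilon\log F_C$ for a general convex domain is not obviously coercive on $PT^{1,0}\mathfrak{D}$ and would need to be worked out; if you keep the maximum-principle route for the upper bound, you should either restrict to the homogeneous case or replace that perturbation sketch by the citation the paper uses.
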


\begin{proof}
It is clear that \eqref{fc} holds for $(z;v)=(z;0)$. It suffices to show that \eqref{fc} holds for all points $(z;v)\in \widetilde{T^{1,0}\mathfrak{D}}$ or equivalently  $(z;[v])\in PT^{1,0}\mathfrak{D}$ since \eqref{fc} is invariant if  $v$ is replaced with $\lambda v$ for any $\lambda\in\mathbb{C}^\ast$.

Suppose at the point $(z_0;[v_0])\in PT^{1,0}\mathfrak{D}$, we have $\frac{F_C(z_0;v_0)}{F(z_0;v_0)}=\frac{1}{c_1}$. Namely $\frac{F_C(z;v)}{F(z;v)}$ achieves its maximum at the point $(z_0;[v_0])\in PT^{1,0}\mathfrak{D}$. Then
for any holomorphic mappings $\varphi:\mathbb{D}\rightarrow \mathfrak{D}$ satisfying $\varphi(0)=z_0$ and $\varphi'(0)=\lambda v_0$ with $\lambda\in\mathbb{C}^\ast$, the function $\log\frac{\varphi^\ast F_C^2}{\varphi^\ast F^2}$ achieves its maximum at the point $\zeta=0\in\mathbb{D}$. Thus
$\triangle\left(\log \frac{\varphi^\ast F_C^2}{\varphi^\ast F^2}\right)(0)\leq 0$,which together with  \eqref{cin} in  Proposition \ref{pmp} implies that
\begin{equation*}
K_{F}(z_0;[v_0])-\frac{F_C^2(z_0;v_0)}{F^2(z_0;v_0)}K_{F_C}(z_0;[v_0])\leq 0.\label{E-a}
\end{equation*}
Thus for any $(z;v)\in PT^{1,0}\mathfrak{D}$,  we have
\begin{equation}
\frac{F_C^2(z;v)}{F^2(z;v)}\leq \frac{1}{c_1}=\frac{F_C^2(z_0;v_0)}{F^2(z_0;v_0)}\leq \frac{K_F(z_0;[v_0])}{-4},\label{ci}
\end{equation}
since on a convex domains we always have $K_{F_C}(z_0;[v_0])\equiv-4$. By assumption, we have
$$-K_1=\inf_{(z;[v])\in PT^{1,0}\mathfrak{D}} K_F(z;[v])\leq K_F(z_0;[v_0])\leq \sup_{(z;[v])\in PT^{1,0}\mathfrak{D}} K_F(z;[v])= -K_2,$$
this together with \eqref{ci} implies $c_1\geq \frac{4}{K_1}$ and $F^2(z;v)\geq \frac{4}{K_1}F_C^2(z;v)$ for any $(z;v)\in \widetilde{T^{1,0}\mathfrak{D}}$.

Note that if $c_1=\frac{4}{K_1}$, then by \eqref{ci} we have $K_F(z_0;[v_0])\leq -K_1$ which implies that $K_F(z_0;[v_0])=-K_1$. Therefore  $K_F$ achieves its infimum  at $(z_0;[v_0])\in PT^{1,0}\mathfrak{D}$ if and only if $\frac{F_C^2}{F^2}$ achieves its maximum at $(z_0;[v_0])\in PT^{1,0}\mathfrak{D}$.

 The proof of the inequality $F^2(z;v)\leq \frac{4}{K_2}F_C^2(z;v)$ for any $(z;v)\in \widetilde{T^{1,0}\mathfrak{D}}$ follows immediately from Theorem 5.1 in \cite{A2}, which can also be derived by Proposition 3.1.14 in \cite{AP}. For completeness, we give the proof here. Indeed, by assumption  $K_F\leq -K_2<0$, it follows that $K_{\frac{\sqrt{K_2}}{2}F}\leq -4$. Thus by (ii) in Proposition 3.1.14 in \cite{AP}, we have
 $
 \frac{\sqrt{K_2}}{2}F\leq F_C
 $. This implies  $F^2\leq \frac{4}{K_2}F_C^2$
 since on any convex domains the Carath\'eodory metric and the Kobayashi metric coincide, and they are continuous complex Finsler metrics with constant holomorphic curvatures $-4$.
\end{proof}

\begin{remark}\label{rhc}
The definition of holomorphic sectional curvature (resp. holomorphic bisectional curvature) adopted in this paper  is equal to  $2$ times that of the holomorphic sectional curvature (resp. holomorphic bisectional curvature) adopted in \cite{Look3}.
If we normalize both $F_C$ and $F$ so that  the holomorphic curvature of $F_C$ is $-1$, and the holomorphic sectional curvature of $F$ is bounded below and above by $-1$ and $-\frac{K_1}{K_2}$, respectively, then \eqref{fc} turns out to be
$$
F_C^2(z;v)\leq F^2(z;v)\leq \frac{K_1}{K_2}F_C^2(z;v).
$$
\end{remark}

By Theorem \ref{m-th-1} and \ref{thm-fcc}, we immediately have the following comparison theorem of invariant strongly pseudoconvex complex Finsler metrics with the Carath\'eodory metric (resp. the Kobayashi metric), which will be used to prove Theorem \ref{m-th-2}.

\begin{theorem}\label{lemb}
Let $\mathfrak{D}$ be a classical domain which is endowed with an $\mbox{Aut}(\mathfrak{D})$-invariant K\"ahler-Berwald metric $F:T^{1,0}\mathfrak{D}\rightarrow [0,+\infty)$. Suppose
$$\min_{(Z;V)\in PT^{1,0}\mathfrak{D}} K_{F}(Z;[V])=-K_1<0\;\mbox{and}\;
 \max_{(Z;V)\in PT^{1,0}\mathfrak{D}} K_{F}(Z;[V])=-K_2<0.
$$
Then
\begin{equation}
\frac{4}{K_1}F_C^2(Z;V)\leq F^2(Z;V)\leq \frac{4}{K_2}F_C^2(Z;V),\quad \forall (Z;V)\in T^{1,0}\mathfrak{D}.\label{fec}
\end{equation}
In particular, if $F_B:T^{1,0}\mathfrak{D}\rightarrow [0,+\infty)$ is the Bergman metric on $\mathfrak{D}$, then we have
\begin{equation}
\frac{4}{K_1}F_C^2(Z;V)\leq F_B^2(Z;V)\leq \frac{4}{K_2}F_C^2(Z;V),\quad  \forall (Z;V)\in T^{1,0}\mathfrak{D}.\label{CBC}
\end{equation}
\end{theorem}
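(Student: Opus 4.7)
The plan is to reduce Theorem \ref{lemb} directly to Theorem \ref{thm-fcc} after checking that its hypotheses are satisfied on any classical domain. First I would recall that each classical domain $\mathfrak{D}$ is a bounded, convex, homogeneous domain containing the origin (as listed in Section \ref{section-3}), so Lempert's theorem guarantees that the Carath\'eodory metric $F_C$ coincides with the Kobayashi metric on $\mathfrak{D}$ and is a continuous complex Finsler metric that is strictly positive on nonzero tangent vectors. Since $F$ is a K\"ahler-Berwald metric, it is in particular strongly pseudoconvex, so $F(0;\cdot)$ is a continuous complex Minkowski norm on $T_0^{1,0}\mathfrak{D}$ that is smooth and positive off the zero section.

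The main technical step is to verify the comparability condition \eqref{CF} between $F$ and $F_C$. Because both $F(0;\cdot)$ and $F_C(0;\cdot)$ are continuous and positive on $\widetilde{T_0^{1,0}\mathfrak{D}}$ and are $1$-homogeneous with respect to $\lambda\in\mathbb{C}^*$, the ratio $F^2(0;V)/F_C^2(0;V)$ descends to a continuous positive function on the compact projective fiber $PT_0^{1,0}\mathfrak{D}$, and therefore attains a positive minimum $\tilde c_1$ and a positive maximum $\tilde c_2$ there. Both $F$ and $F_C$ are $\mbox{Aut}(\mathfrak{D})$-invariant, and $\mbox{Aut}(\mathfrak{D})$ acts transitively on $\mathfrak{D}$, so pulling back by the translations $\varPhi_{Z_0}\in\mbox{Aut}(\mathfrak{D})$ with $\varPhi_{Z_0}(Z_0)=0$ extends the pointwise bound to
\[
\tilde c_1\, F_C^2(Z;V)\leq F^2(Z;V)\leq \tilde c_2\, F_C^2(Z;V),\qquad \forall (Z;V)\in T^{1,0}\mathfrak{D}.
\]
This is precisely the hypothesis \eqref{CF} in Theorem \ref{thm-fcc}. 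The curvature bounds $-K_1\leq K_F\leq -K_2<0$ are built into the hypothesis of Theorem \ref{lemb}, so Theorem \ref{thm-fcc} immediately upgrades the constants to $\tilde c_1\geq 4/K_1$ and $\tilde c_2\leq 4/K_2$, yielding \eqref{fec}.

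For the second statement \eqref{CBC}, I would simply specialize the first to the Bergman metric $F_B$. Since $F_B$ is Hermitian and $\mbox{Aut}(\mathfrak{D})$-invariant, it is automatically a (Hermitian quadratic) K\"ahler-Berwald metric; by Look's classical curvature computations (see \cite{Look3}) its holomorphic sectional curvature is sandwiched between two negative constants $-K_1$ and $-K_2$ determined by $\mbox{rank}(\mathfrak{D})$. Applying \eqref{fec} to $F=F_B$ then produces \eqref{CBC}.

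The step I expect to be the main obstacle is the comparability check with $F_C$, not any curvature estimate: one must use simultaneously the boundedness and convexity of $\mathfrak{D}$ (to ensure that $F_C$ is a continuous, strictly positive complex Finsler metric), the compactness of $PT_0^{1,0}\mathfrak{D}$, and the homogeneity of $\mathfrak{D}$ together with the $\mbox{Aut}(\mathfrak{D})$-invariance of both $F$ and $F_C$. Once that is in place, everything else is a direct citation of Theorem \ref{thm-fcc}.
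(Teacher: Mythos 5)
Your proposal is correct and follows essentially the same route as the paper: establish the two-sided comparability of $F$ with $F_C$ at the origin via compactness of the projectivized fiber, propagate it over $\mathfrak{D}$ by homogeneity and $\mbox{Aut}(\mathfrak{D})$-invariance, and then invoke Theorem \ref{thm-fcc} to sharpen the constants to $4/K_1$ and $4/K_2$ (the paper merely cites Suzuki's theorem for the continuity of the ratio on $PT_0^{1,0}\mathfrak{D}$ where you argue it directly, which is an inessential difference).
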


\begin{proof}
It is well-known that a classical domain $\mathfrak{D}$ is a convex domain in $\mathbb{C}^r$ with $r=\dim\,(\mathfrak{D})$. So that the Carath\'eodory metric and the Kobayashi metric on $\mathfrak{D}$ coincide, namely $F_C(Z;V)=F_K(Z;V)$ for any $(Z;V)\in T^{1,0}\mathfrak{D}$. Moreover, both of them are continuous complete complex Finsler metrics with constant holomorphic curvature $-4$.

On the other hand, by Theorem 2 in \cite{Suzuki}, the function
$\phi(V):=\frac{F^2(0;V)}{F_C^2(0;V)}$
is continuous and well-defined on $PT_{0}^{1,0}\mathfrak{D}\cong\mathbb{CP}^{r-1}$ with $r:=\dim\,(\mathfrak{D})$. Since $\mathbb{CP}^{r-1}$ is  compact, there exist two positive constants $c_1$ and $c_2$ which are exactly the minimum and maximum of $\psi$ on $PT_{0}^{1,0}\mathfrak{D}$ respectively, such that
$c_1\leq \phi(V)\leq c_2$ for any $V\in PT_{0}^{1,0}\mathfrak{D}$.
Thus we have
$c_1F_C^2(0;V)\leq F^2(0;V)\leq c_2F_C^2(0;V)$ for any $V\in T_{0}^{1,0}\mathfrak{D}$.
 Since $\mathfrak{D}$ is a homogeneous domain,  $F_C$ and $F$ are  $\mbox{Aut}(\mathfrak{D})$-invariant. This implies that
$$
c_1F_C^2(Z;V)\leq F^2(Z;V)\leq c_2F_C^2(Z;V),\quad \forall (Z;V)\in T^{1,0}\mathfrak{D}.
$$
Thus by Theorem \ref{thm-fcc} and the assumption of the Theorem \ref{lemb}, we immediately obtain \eqref{fec}.
\end{proof}

\begin{remark}
By Remark \ref{rhc} and Theorem 6, 9, 11, 12 and 16 in \cite{Look3}, it is easy  to check that the Bergman metrics on  the classical domains indeed satisfy \eqref{CBC} in Theorem \ref{lemb}.
\end{remark}

\subsection{Proof of Theorem \ref{m-th-2}}

Now let $D\subset\mathbb{C}^n$ be a homogeneous domain which is endowed with an $\mbox{Aut}(D)$-invariant strongly pseudoconvex complex Finsler metric $F:T^{1,0}D\rightarrow [0,+\infty)$.
The following theorem shows that, in order to establish a Schwarz lemma for holomorphic mappings $f$ from $D$ into itself with respect to $F$ which is not necessary Hermitian quadratic, it suffices to establish a Schwarz lemma for the induced complex Minkowski norm $\pmb{f}(v):=F(0;v)$ for $v\in T_0^{1,0}D$.
\begin{theorem}\label{h-thm}
Let $D\subset\subset \mathbb{C}^n$ be a bounded homogeneous domain which  contains the origin $0$ and $F:T^{1,0}D\rightarrow [0,+\infty)$  an $\mbox{Aut}(D)$-invariant complex Finsler metric. If there exists a positive constant $c$ such that for any holomorphic mapping $f:D\rightarrow D$ with $f(0)=0$,
\begin{equation}
(f^\ast F)(0;v)\leq \sqrt{c}F(0;v),\quad \forall v\in T_0^{1,0}D.\label{fF}
\end{equation}
Then for any fixed point $z_0\in D$ and any holomorphic mapping $g:D\rightarrow D$, we have
\begin{equation}
(g^\ast F)(z_0;v)\leq \sqrt{c}F(z_0;v),\quad \forall v\in T_{z_0}^{1,0}D.\label{gF}
\end{equation}
\end{theorem}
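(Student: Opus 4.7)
The plan is to reduce the inequality at an arbitrary point $z_0$ to the hypothesis at the origin, using only the homogeneity of $D$ and the $\mathrm{Aut}(D)$-invariance of $F$. Since the hypothesis is that $F$ shrinks under holomorphic self-maps fixing the origin, the natural move is to conjugate $g$ by a pair of automorphisms carrying $z_0$ and $g(z_0)$ to $0$.

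Concretely, fix $z_0 \in D$ and a holomorphic $g:D\to D$, and set $w_0:=g(z_0)\in D$. By homogeneity, choose $\phi_{z_0},\phi_{w_0}\in\mathrm{Aut}(D)$ with $\phi_{z_0}(z_0)=0$ and $\phi_{w_0}(w_0)=0$, and define
$$\tilde g:=\phi_{w_0}\circ g\circ \phi_{z_0}^{-1}:D\to D,$$
so that $\tilde g(0)=0$. Applying the hypothesis \eqref{fF} to $\tilde g$ gives
$$F\bigl(0;\tilde g_\ast(u)\bigr)\leq \sqrt{c}\,F(0;u),\qquad \forall\,u\in T_0^{1,0}D.$$

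Next, given $v\in T_{z_0}^{1,0}D$, specialize to $u=(\phi_{z_0})_\ast(v)$. The chain rule yields
$$\tilde g_\ast(u)=(\phi_{w_0})_\ast\circ g_\ast\circ (\phi_{z_0}^{-1})_\ast\circ (\phi_{z_0})_\ast(v)=(\phi_{w_0})_\ast\bigl(g_\ast(v)\bigr).$$
Using the $\mathrm{Aut}(D)$-invariance of $F$ with the automorphisms $\phi_{z_0}^{-1}$ and $\phi_{w_0}^{-1}$, we compute
$$F(0;u)=F\bigl(0;(\phi_{z_0})_\ast(v)\bigr)=F(z_0;v),$$
and
$$F\bigl(0;\tilde g_\ast(u)\bigr)=F\bigl(0;(\phi_{w_0})_\ast(g_\ast(v))\bigr)=F\bigl(w_0;g_\ast(v)\bigr)=(g^\ast F)(z_0;v).$$
Substituting into the inequality obtained for $\tilde g$ at the origin yields precisely \eqref{gF}.

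The argument is essentially a chain-rule bookkeeping: there are no genuine analytic estimates beyond the assumed one at the origin, and the only mildly delicate point is to apply the invariance of $F$ to the correct automorphism (namely $\phi_{w_0}^{-1}$ rather than $\phi_{w_0}$) so that the base points transport as $0\mapsto w_0$ and $0\mapsto z_0$. I do not expect any real obstacle; the proof is short and concludes by combining the three displayed equalities.
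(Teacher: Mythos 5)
Your proposal is correct and follows essentially the same route as the paper's own proof: conjugating $g$ by automorphisms sending $z_0$ and $w_0=g(z_0)$ to the origin, applying the origin hypothesis to the conjugated map at the vector $(\phi_{z_0})_\ast(v)$, and transporting back via the $\mathrm{Aut}(D)$-invariance of $F$. No issues.
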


\begin{proof}
Let $z_0$ be any fixed point in $D$. Denote $w_0=g(z_0)$. Since $D$ is homogeneous, there exists $h_{z_0}\in\mbox{Aut}(D)$ and $l_{w_0}\in\mbox{Aut}(D)$ such that
$h_{z_0}(z_0)=0$ and $l_{w_0}(w_0)=0$. Denote $h_{z_0}^{-1}$ the inverse of $h_{z_0}$.  Then $h_{z_0}^{-1}\in\mbox{Aut}(D)$ and $h_{z_0}^{-1}(0)=z_0$. Thus
$l_{w_0}\circ g\circ h_{z_0}^{-1}: D\rightarrow D$ is a holomorphic mapping satisfying
$l_{w_0}\circ g\circ h_{z_0}^{-1}(0)=0$.
By assumption \eqref{fF}, we have
$$
F\left(\left(l_{w_0}\circ g\circ h_{z_0}^{-1}\right)(0);\left(l_{w_0}\circ g\circ h_{z_0}^{-1}\right)_\ast (v)\right)\leq \sqrt{c}F(0;v),\quad \forall v\in T_0^{1,0}D.
$$
Since $F$ is $\mbox{Aut}(D)$-invariant and $l_{w_0}\in\mbox{Aut}(D)$, we have
\begin{eqnarray*}
F\left(g(z_0);g_\ast(v)\right)&=&F\left(l_{w_0}\circ g(z_0);(l_{w_0})_\ast g_\ast(v)\right)\\
&=&F\left(\left(l_{w_0}\circ g\circ h_{z_0}^{-1}\right)(0);\left(l_{w_0}\circ g\circ h_{z_0}^{-1}\right)_\ast (h_{z_0})_\ast(v)\right).
\end{eqnarray*}
Again applying \eqref{fF} to  the right hand side of the above last equality we obtain
\begin{eqnarray*}
F\left(g(z_0);g_\ast(v)\right)\leq \sqrt{c}F(z_0;v),
\end{eqnarray*}
which is exactly the inequality \eqref{gF}.
\end{proof}

The following lemma is a direct consequence of Theorem 6.3 in \cite{Kobayashi2}.

\begin{lemma}\label{lem-a}
Let $\mathbb{D}$ be the open unit disk in $\mathbb{C}$ which is endowed with an $\mbox{Aut}(\mathbb{D})$-invariant K\"ahler metric
$\mathcal{P}(z;dz)=\frac{2}{\sqrt{K_1}}\frac{\mid dz\mid}{1-\mid z\mid^2}$ with constant Gaussian curvature $-K_1<0$. Let $\mathfrak{D}$ be a classical domain  which is endowed with an $\mbox{Aut}(\mathfrak{D})$-invariant K\"ahler-Berwald metric $F$ such that its holomorphic sectional curvature is bounded above by a negative constant $-K_2$. Then for any holomorphic mapping $f$ from $\mathbb{D}$ into $\mathfrak{D}$, we have
\begin{equation}
(f^\ast F)(z;v)\leq \sqrt{\frac{K_1}{K_2}}\mathcal{P}(z;v),\quad \forall (z;v)\in T^{1,0}\mathbb{D}.
\end{equation}
\end{lemma}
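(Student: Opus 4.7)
The plan is to reduce Lemma 1 to the classical Ahlfors–Schwarz/Kobayashi comparison lemma for pseudo-Hermitian metrics on $\mathbb{D}$. Given a holomorphic map $f:\mathbb{D}\to\mathfrak{D}$, the pullback is computed pointwise by $(f^*F)(z;v)=F(f(z);f'(z)v)=|v|\,F(f(z);f'(z))$ thanks to the $1$-homogeneity of $F$. Therefore $(f^*F)^2(z;v)=|v|^2 u(z)$ with $u(z):=F^2(f(z);f'(z))$, so $f^*F$ is automatically a pseudo-Hermitian metric on the one-dimensional disk $\mathbb{D}$, smooth and strictly positive on the complement of the discrete set $Z(f):=\{z\in\mathbb{D}:f'(z)=0\}$ (assuming $f$ is non-constant, else the inequality is trivial).

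The first step is to show that the Gaussian curvature of this pseudo-Hermitian metric is bounded above by $-K_2$ on $\mathbb{D}\setminus Z(f)$. This is where the hypothesis on $F$ enters. By the definition of the holomorphic sectional curvature for upper semi-continuous complex Finsler metrics recalled in Section 4.1, one has
\begin{equation*}
K(\varphi^*F^2)(0)\leq K_F\bigl(\varphi(0);\varphi'(0)\bigr)\leq -K_2
\end{equation*}
for every holomorphic disk $\varphi:\mathbb{D}\to\mathfrak{D}$. Fixing $z_0\in\mathbb{D}\setminus Z(f)$ and choosing $\psi\in\mbox{Aut}(\mathbb{D})$ with $\psi(0)=z_0$, I would apply the above with $\varphi:=f\circ\psi$. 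The biholomorphic invariance of the Gaussian curvature under $\psi$, together with the scaling invariance $K_F(p;\lambda v)=K_F(p;v)$ for $\lambda\in\mathbb{C}^\ast$, then gives
\begin{equation*}
K(f^*F^2)(z_0)=K(\varphi^*F^2)(0)\leq K_F(f(z_0);f'(z_0))\leq -K_2.
\end{equation*}

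The second step is to invoke the classical Ahlfors–Schwarz lemma in the form of Theorem 6.3 of \cite{Kobayashi2}: if $u$ is a (possibly degenerate) pseudo-Hermitian metric on $\mathbb{D}$ whose Gaussian curvature satisfies $K(u)\leq -K_2$ wherever $u>0$, and if $\mathcal{P}$ is the rescaled Poincar\'e metric of constant Gaussian curvature $-K_1$, then $u\leq\sqrt{K_1/K_2}\,\mathcal{P}$ on all of $\mathbb{D}$. Applying this to $u=f^*F$ yields precisely the claimed inequality.

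The one technical point to address is the behavior at the zero set $Z(f)$, where $f^*F$ degenerates and its Gaussian curvature is not directly defined. This is exactly the degeneracy the Ahlfors–Schwarz comparison is designed to tolerate, since the curvature bound is required only where the metric is positive, and $Z(f)$ is a discrete subset of $\mathbb{D}$; the desired inequality then extends to $Z(f)$ by continuity (both sides vanish there, or the left side does while the right is positive). This is the step to handle most carefully, but no new ideas beyond the standard Ahlfors–Schwarz technique are needed; the Kähler–Berwald structure of $F$ is used only through the curvature bound invoked in the first step.
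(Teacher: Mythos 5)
Your proposal is correct and follows essentially the same route as the paper, which simply states that the lemma is a direct consequence of Theorem 6.3 in Kobayashi's 1976 survey; you have merely unpacked that citation by verifying the curvature bound $K(f^\ast F^2)\leq -K_2$ via the disk-supremum definition of holomorphic curvature and then invoking the Ahlfors--Schwarz comparison. The details you supply (the reduction to a pseudo-Hermitian metric on $\mathbb{D}$, the use of $\mathrm{Aut}(\mathbb{D})$ to move the base point, and the handling of the degeneracy set $Z(f)$) are all sound.
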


Now we are in a position to prove Theorem \ref{m-th-2}.

\begin{proof}(\textbf{Proof of Theorem \ref{m-th-2}}). It is clear that \eqref{fhg} holds whenever $V=\pmb{0}$. Thus it suffices to show that \eqref{fhg} holds for any $\pmb{0}\neq V\in T_Z^{1,0}\mathfrak{D}_1$.
By  \eqref{fec} in Theorem \ref{lemb},  for each fixed vector $V\in T_{0}^{1,0}\mathfrak{D}_1$ with $F_1(0;V)=\sqrt{\frac{4}{K_1}}$, we  have
$\sqrt{\frac{4}{K_1}}F_C(0;V)\leq F_1(0;V),$
 which implies that $F_C(0;V) \leq 1$.
Thus $V\in \overline{I_0(F_C)}$, which is the closure of $I_0(F_C):=\{V\in T_0^{1,0}\mathfrak{D}_1:F_C(0;V)<1\}$. Since $\mathfrak{D}_1$ is a convex domain, by Theorem B in \cite{Suzuki} we have $I_0(F_C)=\mathfrak{D}_1$. Using  $V$  we are able to define a holomorphic embedding $j_V:\mathbb{D}\rightarrow \mathfrak{D}_1$. Indeed, let $\mathbb{D}$ be the open unit disk endowed with the $\mbox{Aut}(\mathbb{D})$-invariant metric $\mathcal{P}(z;v)=\frac{2}{\sqrt{K_1}}\frac{\mid v\mid}{1-\mid z\mid^2}$ whose Gaussian curvature is identically $-K_1$. Define $j_V: (\mathbb{D}, \mathcal{P})\rightarrow (\mathfrak{D}_1,F_1)$ as follows:
\begin{equation}
j_V(z)=zV\in \mathfrak{D}_1, \quad z\in \mathbb{D}.\label{emb}
\end{equation}
It is clear that $j_V:\mathbb{D} \rightarrow \mathfrak{D}_1$ is a holomorphic mapping. Furthermore, since
\begin{equation}
(j_V^\ast F_1)(0;1)=F_1(0;V)=\sqrt{\frac{4}{K_1}}=\mathcal{P}\left(0;1\right),\label{ism}
\end{equation}
it follows that $j_V:(\mathbb{D}, \mathcal{P})\rightarrow (\mathfrak{D}_1,F_1)$ is a holomorphic isometry at the origin $0\in\mathfrak{D}_1$.

On the other hand, for each $V\in T_0^{1,0}\mathfrak{D}_1$ satisfying $F_1(0;V)=\sqrt{\frac{4}{K_1}}$, there exists a holomorphic mapping $j_V:\mathbb{D}\rightarrow\mathfrak{D}_1$ and a tangent vector $y=1\in\mathbb{C}\cong T_0^{1,0}\mathbb{D}$ such that $j_V(0)=0$ and $(j_V)_\ast(1)=V$, here $(j_V)_\ast$ denotes the differential of $j_V$ at $0\in\mathbb{D}$. Indeed, it suffices to define $j_V$  by \eqref{emb}. Thus for any holomorphic mapping $f: (\mathfrak{D}_1, F_1) \rightarrow (\mathfrak{D}_2, F_2)$, we have
\begin{eqnarray}
\left(f^\ast F_2\right)(0;V)
&=&F_2\left(f(0);f_{\ast}(V)\right)
=F_2\left((f\circ j_V)(0);(f\circ j_V)_{\ast} (1)\right)\nonumber\\
&\leq& \sqrt{\frac{K_1}{K_2}}\mathcal{P}(0;1)\label{ieqa}\\
&=&\sqrt{\frac{K_1}{K_2}}F_1(0;V),\label{ieqb}
\end{eqnarray}
where the inequality \eqref{ieqa} follows from Lemma \ref{lem-a} (applied to the holomorphic mapping $f \circ j_V: (\mathbb{D}, \mathcal{P}) \rightarrow (\mathfrak{D}_2,F_2)$ ) and the equality \eqref{ieqb} follows from \eqref{ism}. Thus we have proved
\begin{equation}
(f^\ast F_2)(0;V)\leq
\sqrt{\frac{K_1}{K_2}}F_1(0;V),\label{pbva}
\end{equation}
for any holomorphic mapping $f:(\mathfrak{D}_1,F_1)\rightarrow (\mathfrak{D}_2,F_2)$ and any fixed tangent vector $V\in T_0^{1,0}\mathfrak{D}_1$ satisfying $F_1(0;V)=\sqrt{\frac{4}{K_1}}$.

Since for any $\pmb{0}\neq V\in T_0^{1,0}\mathfrak{D}_1$, using the homogeneity of the metric $F_1$,  we have
$$
F_1\left(0;\sqrt{\frac{4}{K_1}}\frac{V}{F_1(0;V)}\right)\equiv\sqrt{\frac{4}{K_1}}
$$
and \eqref{pbva} is invariant whenever we replace $V$ with $\lambda V$ such that $\lambda\in\mathbb{C}^\ast$.
Thus \eqref{pbva}  actually holds for any $V\in T_0^{1,0}\mathfrak{D}_1$. Since $\mathfrak{D}_1$ is a homogeneous domain, composing $f$ with $\varphi_{Z_0}^{-1}\in\mbox{Aut}(\mathfrak{D}_1)$ (here $\varphi_{Z_0}(Z_0)=0$) if necessary, we have
\begin{eqnarray}
\left(f^\ast F_2\right)(Z_0;V)
&=&F_2\left(f(Z_0);f_\ast(V)\right)\nonumber\\
&=&F_2\left(\left(f\circ \varphi_{Z_0}^{-1}\right)(0);\left(f\circ\varphi_{Z_0}^{-1}\right)_\ast\left((\varphi_{Z_0})_\ast (V)\right)\right)\nonumber\\
&=&\left(\left(f\circ \varphi_{Z_0}^{-1}\right)^\ast F_2\right)\left(0;\left(\varphi_{Z_0}\right)_\ast (V)\right)\nonumber\\
&\leq&\sqrt{\frac{K_1}{K_2}} F_1\left(0;\left(\varphi_{Z_0}\right)_\ast (V)\right)\label{U-1}\\
&=&\sqrt{\frac{K_1}{K_2}} F_1\left(\varphi_{Z_0}(Z_0);\left(\varphi_{Z_0}\right)_\ast(V)\right)\nonumber\\
&=&\sqrt{\frac{K_1}{K_2}} F_1(Z_0;V)\label{U-2}
\end{eqnarray}
where the inequality \eqref{U-1} follows from \eqref{ieqb} and we use the $\mbox{Aut}(\mathfrak{D}_1)$-invariant property of $F_1$ to obtain \eqref{U-2} since $\varphi_{Z_0}\in\mbox{Aut}(\mathfrak{D}_1)$.
Thus we have proved the following inequality
\begin{equation}
\left(f^\ast F_2\right)(Z_0;V) \leq \sqrt{\frac{K_1}{K_2}}F_1(Z_0;V)\label{eqz}
\end{equation}
for any fixed $Z_0\in \mathfrak{D}_1$ and any vector $V\in T_{Z_0}^{1,0}\mathfrak{D}_1$. We remark that \eqref{eqz} also follows from Theorem \ref{h-thm}  since we have shown that \eqref{pbva} actually holds for any $V\in T_0^{1,0}\mathfrak{D}_1$.

Finally, since \eqref{eqz} holds for an arbitrary fixed point $Z_0\in \mathfrak{D}_1$, changing $Z_0$ into $Z$ if necessary, we obtain \eqref{fhg}. This completes the proof.

\end{proof}

\vskip0.3cm
\noindent
\textbf{Acknowledgement.}\ {The author is very grateful to the referee for invaluable comments and suggestions for revision which have significantly improved the quality of the paper. This work is supported by the National Natural Science Foundation of China (Grant Nos. 12471080, 12071386)}

\vskip0.5cm
\noindent
\textbf{Declarations}\\
\noindent
\textbf{Conflicts of interest/Competing interests} (There is no conflicts of interest)\\
\noindent
\textbf{Availability of data and material} (My manuscript has no associated data)\\
\noindent
\textbf{Code availability} (Not applicable)\\



\end{document}